\def\N{\mathbb N}
\def\Z{\mathbb Z}
\def\R{\mathbb R}
\def\C{\mathbb C}
\newcommand{\be}{\begin{eqnarray}}
\newcommand{\ee}{\end{eqnarray}}
\newcommand{\Tk}{{\mathcal T}}
\newcommand{\Gk}{{\mathcal G}}
\newcommand{\Sk}{{\mathcal S}}
\newcommand{\Jk}{{\mathcal J}}
\newcommand{\supp}{\mbox{\rm supp}}
\newcommand{\dens}{\mbox{\rm dens}}
\newcommand{\Vol}{\mbox{\rm Vol}}
\newcommand{\Lb}{\mbox {\boldmath ${\Lambda}$}}
\newcommand{\Lbs}{\mbox{\scriptsize\boldmath ${\Lambda}$}}
\newcommand{\mmax}{\rm max}
\newcommand{\diam}{\mbox{\rm diam}}
\newcommand{\Lam}{{\Lambda}}
\newcommand{\Pb}{\mbox {\bf P}}
\newcommand{\Gb}{\mbox {\boldmath ${\Gamma}$}}
\newcommand{\Dk}{{\mathcal D}}
\newcommand{\Ak}{{\mathcal A}}
\newtheorem{theorem}{Theorem}[section]
\newtheorem{thm}[theorem]{Theorem}
\newtheorem{prop}[theorem]{Proposition}
\newtheorem{lem}[theorem]{Lemma}
\newtheorem{conj}[theorem]{Conjecture}
\newtheorem{defi}[theorem]{Definition}
\newtheorem{rem}[theorem]{Remark}
\newtheorem{ex}[theorem]{Example}
\numberwithin{equation}{section}
\begin{document}
\title[Pure pointedness of self-affine tilings]{Algorithm for
determining pure pointedness of self-affine tilings }

\date{\today}

\thanks{The first author is supported by the Japanese Society for the Promotion of Science (JSPS), grant in aid 21540010. The second author is grateful for the support of
KIAS in this research.}

\maketitle



 \centerline{
{ Shigeki Akiyama $^{\,\rm a}$ and Jeong-Yup Lee $^{\,\rm b}$}}

\hspace*{4em}

\smallskip

{\footnotesize
\hspace*{4em} a: Department of Mathematics, Faculty of Science, Niigata University, \\
\hspace*{4em} \hspace*{2.5em} 8050 Ikarashi-2, Nishi-ku Niigata,
Japan (zip: 950-2181) \\
\hspace*{4em} \hspace*{2.5em}
\email{akiyama@math.sc.niigata-u.ac.jp}

\smallskip
\hspace*{4em} b: KIAS 207-43, Cheongnyangni 2-dong, Dongdaemun-gu,
\\ \hspace*{4em} \hspace*{2.5em} Seoul 130-722, Korea \\
\hspace*{4em} \hspace*{2.5em} \email{jylee@kias.re.kr} }

\begin{abstract}
Overlap coincidence in a self-affine tiling in $\R^d$ is
equivalent to pure point dynamical spectrum of the tiling
dynamical system. We interpret the overlap coincidence in the
setting of substitution Delone set in $\R^d$ and find an efficient
algorithm to check the pure point dynamical spectrum. This
algorithm is easy to implement into a computer program. We give
the program and apply it to several examples. In the course the
proof of the algorithm, we show a variant of the conjecture of
Urba\'nski (Solomyak \cite{Solomyak:08}) on the Hausdorff
dimension of the boundaries of fractal tiles.
\end{abstract}

\hspace*{4em}

{\footnotesize Keywords: Pure point spectrum, Self-affine tilings,
Coincidence, Substitution Delone sets, Meyer sets, \\
\hspace*{4em} \hspace*{1.5em} Algorithm, Quasicrystals, Hausdorff
dimension, Fractals.}

\section{Introduction}

To model self-inducing structures of dynamical systems, symbolic
dynamical systems associated with substitutions play an important
role and many works describe their spectral properties and
geometric realizations (see \cite{Fogg}). To extend the symbolic
substitutive systems to higher dimensions, self-affine tiling
dynamical systems are studied in detail in \cite{soltil, LMS2} and
many related studies are done along this line. These tiling
dynamical systems share many properties with the symbolic
substitutive systems and are intimately related to the explicit
construction of Markov partitions. It is a subtle question to
determine whether a given tiling dynamical system has pure point
dynamical spectrum or not. It is known from \cite{soltil, LMS2}
that `overlap coincidence' (see
Def.\,\ref{def-overlapCoincidence}) is an equivalent criterion to
check this. However the overlap coincidence was not easy to
compute there in practice because it requires topological
properties of the tiles.
To settle this difficulty, we shall employ the duality between
self-affine tilings and substitutive Delone sets
\cite{lawa,LMS2,Lee}. An aim of this paper is to transfer the
overlap coincidence to substitution Delone sets, find a computable
algorithm to check the pure pointedness and implement it into a
program language.

Further motivation to show the pure pointedness comes from the
study of aperiodic order. It is an interesting question to ask
what kind of point sets, modeling atomic configurations, present
pure point diffraction. This is related with the understanding of
the fundamental structures of quasicrystals. It has been known
from \cite{LMS1, Gouere, BL} that pure point diffraction spectrum
is equivalent to pure point dynamical spectrum in quite a general
setting. So the algorithm we give here can be used for checking
pure point diffraction of general
self-affine quasi-periodic structures.


There are many equivalent criteria to the pure point dynamical
spectrum in literature. Among them, coincidences are very well
known as a characterization of the pure point dynamical spectrum.
There are many different notions of coincidences but basically
they imply the same thing. In $1$-dim substitution sequences,
Dekking's coincidence is well-known for the case of
constant-length substitutions \cite{Dek}. For $1$-dim irreducible
Pisot substitution sequences or tilings, super coincidence, strong
coincidence, geometric coincidence, balanced pairs, and boundary
graph are known \cite{IR,AI,BK,SS,ST,Fogg}. In higher dimensions,
modular coincidence was introduced for lattice substitution Delone
sets \cite{LM1, LMS2,DS}, and overlap coincidence and algebraic
coincidence are known for substitution tilings and substitution
Delone sets under the assumption of Meyer property \cite{soltil,
Lee}. We are going to use the overlap coincidence for computation
here.

We note that it is essential to assume the Meyer property of the
corresponding substitution Delone set. Otherwise, the algorithm
will either not terminate as the number of overlaps becomes
infinite, or terminate with incorrect outputs. It is shown in
\cite{LS} that substitution Delone sets with pure point dynamical
spectrum necessarily have the Meyer property. It is also studied
in \cite{LS2} under which conditions on the expansion maps of the
substitutions, the point sets are guaranteed to have the Meyer
property.

There are a few results in literature for the actual computation
of coincidence. For $1$-dimension unit Pisot substitutions and
self-affine tilings coming from their geometric realizations,
computable algorithm is discussed in \cite{Siegel,ST} using the
boundary graph. For irreducible $1$-dimension Pisot substitution,
balanced pair algorithm is implemented in \cite{SS}.
For higher dimensions, Dekking's coincidence and modular
coincidence are used for the case of lattice substitution Delone
sets in $\R^d$ \cite{Dek, LM1, LMS2}. It was shown in \cite{DS}
that the modular coincidence in lattice substitution Delone sets
can be determined within some bounded iterations. The given bound
is exponential in the number of colours of the Delone sets. It was
conjectured in \cite{DS} that this bound is just polynomial in the
number of colours. We give an affirmative answer to this
conjecture not only for the lattice substitution Delone sets but
also the substitution Delone sets with the Meyer property (see
Remark\,1).

In this paper we compute overlap coincidence for general
self-affine tilings. Our method covers, non-unit cases, higher
dimensional and non-lattice based self-affine tilings.
With regard to computation, already in the original paper by
Solomyak \cite{soltil}, the number of overlaps becomes too large
to handle by hand. Apart from 1-dimensional case with connected
tiles (i.e. intervals), it is quite hard to check whether
translated tiles have intersection. The implementation is already
difficult for polygonal tilings, and moreover, tiles often have
fractal boundaries in higher dimensional cases.
To overcome this difficulty, we escape from judging interior
intersection. We interpret the overlaps in terms of points and
translation vectors, and only care distances between the
corresponding translated tiles. If the distances are within a
rough bound (see (\ref{potential})), we say they are {\it
potential overlaps}. Of course by this change, some pairs of
translated tiles may not intersect, or only meet at their
boundaries. To distinguish these cases from overlaps with
interior intersection, we introduce a {\em potential overlap
graph with multiplicities}. At the expense of having a larger
graph, all computation becomes simple and easy to implement into
computer programs. Showing that our criterion (Theorem \ref{main}
(ii)) is necessary, we prove partially a variant of the conjecture
which asserts that the boundaries of the self-affine tiles have
Hausdorff dimension less than the space dimension $d$ (see
\cite{Solomyak:08} for the conjecture).

The paper is organized in the following way: In Section
\ref{preliminary}, we give definitions and notations. As a main
result, we present a mathematical algorithm computing the overlap
coincidence. In Section \ref{Computing coincidence} and \ref{The
potential overlaps that are not real overlaps}, we give a
justification on this algorithm. In Section \ref{Examples}, we
have built a `Mathematica' program implementing the algorithm and
apply it to $1$, $2$ and $3$-dimensional examples. The spectral
properties of some of the examples have not been known before.

\section{Preliminary} \label{preliminary}

\noindent The notation and terminology we use in this paper is
standard. We refer the reader to \cite{LMS2} for more detailed
definitions and to \cite{LP} for the standard notions.

\subsection{Tilings}

We begin with a set of types (or colours) $\{1,\ldots,m\}$, which
we fix once and for all. A {\em tile} in $\R^d$ is defined as a
pair $T=(A,i)$ where $A=\supp(T)$ (the support of $T$) is a
compact set in $\R^d$, which is the closure of its interior, and
$i=l(T)\in \{1,\ldots,m\}$ is the type of $T$. We let $g+T =
(g+A,i)$ for $g\in \R^d$. We say that a set $P$ of tiles is a {\em
patch} if the number of tiles in $P$ is finite and the tiles of
$P$ have mutually disjoint interiors. The {\em support of a patch}
is the union of the supports of the tiles that are in it. The {\em
translate of a patch} $P$ by $g\in \R^d$ is $g+P := \{g+T:\ T\in
P\}$. We say that two patches $P_1$ and $P_2$ are {\em
translationally equivalent} if $P_2 = g+P_1$ for some $g\in \R^d$.
A {\em tiling} of $\R^d$ is a set $\Tk$ of tiles such that $\R^d =
\bigcup \{\supp(T) : T \in \Tk\}$ and distinct tiles have disjoint
interiors. We always assume that any two $\Tk$-tiles with the same
colour are translationally equivalent (hence there are finitely
many $\Tk$-tiles up to translations). Let $\Xi(\Tk) : = \{x \in
\R^d : \exists \ T, T' \in \Tk, T' = x + T \}$. We say that $\Tk$
has {\em finite local complexity (FLC)} if for each radius $R > 0$
there are only finitely many equivalent classes of patches whose
support lies in some ball of radius $R$. We define $\Tk \cap A :=
\{T \in \Tk : \supp(T) \cap A \neq \emptyset\}$ for a bounded set
$A \subset \R^d$. We say that $\Tk$ is {\em repetitive} if for
every compact set $K \subset \R^d$, $\{t \in \R^d : \Tk \cap K =
(t + \Tk) \cap K\}$ is relatively dense. We write $B_R(y)$ for the
closed ball of radius $R$ centered at $y$ and use also $B_R$ for
$B_R(0)$.

\subsection{Delone multi-colour sets}

\noindent A {\em multi-colour set} or {\em $m$-multi-colour set}
in $\R^d$ is a subset $\Lb = \Lam_1 \times \dots \times \Lam_m
\subset \R^d \times \dots \times \R^d$ \; ($m$ copies) where
$\Lam_i \subset \R^d$. We also write $\Lb = (\Lam_1, \dots,
\Lam_m) = (\Lam_i)_{i\le m}$. Recall that a Delone set is a
relatively dense and uniformly discrete subset of $\R^d$. We say
that $\Lb=(\Lambda_i)_{i\le m}$ is a {\em Delone multi-colour set}
in $\R^d$ if each $\Lambda_i$ is Delone and
$\supp(\Lb):=\cup_{i=1}^m \Lambda_i \subset \R^d$ is Delone. A
{\em cluster} of $\Lb$ is, by definition, a family $\Pb =
(P_i)_{i\le m}$ where $P_i \subset \Lambda_i$ is finite for all
$i\le m$. The translate of a cluster $\Pb$ by $x \in \R^d$ is $x +
\Pb = (x + P_i)_{i \le m}$. We say that two clusters $\Pb$ and
$\Pb'$ are translationally equivalent if $\Pb = x + \Pb'$ for some
$x \in \R^d$. We say that $\Lam \subset \R^d$ is a {\em Meyer set}
if it is a Delone set and $\Lam - \Lam$ is uniformly discrete (\cite{Lag}).
We define FLC and repetitivity on Delone multi-colour sets in the
same way as the corresponding properties on tilings. The types (or
colours) of points on Delone multi-colour sets have the same
concept as the colours of tiles on tilings.

\subsection{Substitutions} \label{substi-pointSet-tilings}

We say that a linear map $Q : \R^d \rightarrow \R^d$ is {\em
expansive} if all the eigenvalues of $Q$
 lie outside the closed unit disk in $\C$.
\subsubsection{Substitutions on tilings}

\begin{defi}\label{def-subst}
{\em Let $\Ak = \{T_1,\ldots,T_m\}$ be a finite set of tiles in $\R^d$
such that $T_i=(A_i,i)$; we will call them {\em prototiles}.
Denote by ${\mathcal{P}}_{\Ak}$ the set of non empty
patches. 
We say that $\Omega: \Ak \to {\mathcal{P}}_{\Ak}$ is a {\em tile-substitution} (or simply
{\em substitution}) with
an expansive map $Q$ if there exist finite sets $\Dk_{ij}\subset \R^d$ for
$i,j \le m$ such that
\begin{equation}
\Omega(T_j)=
\{u+T_i:\ u\in \Dk_{ij},\ i=1,\ldots,m\}
\label{subdiv}
\end{equation}
with
\begin{eqnarray} \label{tile-subdiv}
Q A_j = \bigcup_{i=1}^m (\Dk_{ij}+A_i) \ \ \  \mbox{for} \  j\le m.
\end{eqnarray}
Here all sets in the right-hand side must have disjoint interiors;
it is possible for some of the $\Dk_{ij}$ to be empty.}
\end{defi}

\noindent
Note that $Q A_j = \supp (\Omega(T_j)) = Q \supp(T_j)$.
The substitution (\ref{subdiv}) is extended to all translates of prototiles by
\be \label{tile-equivalence}
\Omega(x+T_j)= Q x + \Omega(T_j),
\ee
in particular,
\be \label{supp-of-iterated-tile}
\supp (\Omega(x + T_j)) & = & \supp(Qx + \Omega(T_j)) \nonumber \\
                        & = & Qx + Q \supp(T_j) \nonumber \\
                        & = & Q(x + \supp(T_j)),
\ee
and to patches and tilings by
$\Omega(P)=\cup\{\Omega(T):\ T\in P\}$.
The substitution $\Omega$ can be iterated, producing larger and larger patches
$\Omega^k(P)$.
We say that $\Tk$ is a {\em substitution tiling} if $\Tk$ is a
tiling and $\Omega(\Tk) = \Tk$ with some substitution $\Omega$. In
this case, we also say that $\Tk$ is a {\em fixed point} of
$\Omega$.
We say that substitution Delone multi-colour set is {\em
primitive} if the corresponding substitution matrix $S$, with
$S_{ij}= \sharp (\Dk_{ij})$, is primitive. A repetitive fixed
point of a primitive tile-substitution with FLC is called a {\em
self-affine tiling}. If $Q$ is a similarity, then the tiling will
be called {\em self-similar}. For any self affine tiling which
holds (\ref{tile-subdiv}), we define $\Phi$ an $m \times m$ array
for which each entry is $\Phi_{ij}$,
\[ \Phi_{ij} = \{ f : x \mapsto Qx + d \, : \,d \in \Dk_{ij}\}\,\]
and call $\Phi$ a {\em matrix function system (MFS)} for the
substitution $\Omega$.

\subsubsection{Substitutions on Delone multi-colour sets}

\begin{defi} \label{def-subst-mul}
{\em $\Lb = (\Lam_i)_{i\le m}$ is called a {\em substitution
Delone multi-colour set} in $\R^d$ if $\Lb$ is a Delone
multi-colour set and there exist an expansive map $Q:\, \R^d\to
\R^d$ and finite sets $\Dk_{ij}$ for $i,j\le m$ such that \be
\label{eq-sub} \Lambda_i = \bigcup_{j=1}^m (Q \Lambda_j +
\Dk_{ij}),\ \ \ i \le m, \ee where the unions on the right-hand
side are disjoint.}
\end{defi}

We say that a cluster $\Pb$ is {\em legal} if it is a translate of
a subcluster of a cluster generated from one point of $\Lb$, i.e.
$a + \Pb \subset \Phi^k (x)$ for some $k \in \Z_+$, $a \in \R^d$
and $x \in \Lb$.

\subsubsection{Representability of $\Lb$ as a tiling} \label{RepresentabilityAsTiling}

Let $\Lb$ be a primitive substitution Delone multi-colour set. One
can set up an {\em adjoint system of equations} \be \label{eq-til}
Q A_j = \bigcup_{i=1}^m (\Dk_{ij} + A_i),\ \ \ j \le m \ee from
the equation (\ref{eq-sub}). It is known that (\ref{eq-til})
always has a unique solution for which $\{A_1, \dots, A_m\}$ is a
family of non-empty compact sets of $\R^d$. It is proved in
\cite[Th.\,2.4 and Th.\,5.5]{lawa} that if $\Lb$ is a primitive
substitution Delone multi-colour set, all the sets $A_i$ from
(\ref{eq-til}) have non-empty interiors and, moreover, each $A_i$
is the closure of its interior. We say that $\Lb$ is {\em
representable} (by tiles) if
\[\Lb + \Ak := \{x + T_i :\ x\in
\Lambda_i,\ i \le m\}\] is a tiling of $\R^d$, where $T_i =
(A_i,i)$, $i \le m$, for which $A_i$'s arise from the solution to
the adjoint system (\ref{eq-til}) and $\Ak = \{T_i : i\le m\} $.
Then $\Lb + \Ak$ is a substitution tiling and we can define a
tile-substitution $\Omega$ satisfying
\[\Omega(\Lb + \Ak) = \Lb + \Ak \] from (\ref{eq-til}). We call $\Lb + \Ak$ the
{\em associated
substitution tiling} of $\Lb$. Let $\Phi = (\Phi_{ij})$ be a MFS
for $\Omega$. For any subset $\Gb = (\Gamma_j)_{j \le m} \subset
\Lb$, $\Phi_{ij}(\Gamma_j) = Q \Gamma_j + \Dk_{ij}$, for $j \le
m$. Let $\Phi(\Gb) = (\cup_{j \le m} \Phi_{ij}(\Gamma_j))_{i \le
m}$. Then $\Phi_{ij}(\Lam_j) = Q \Lam_j + \Dk_{ij}$, where $i \le
m$. For any $k \in \Z_+$ and $x \in \Lam_j$ with $j \le m$, we let
$\Phi^k (x) = \Phi^{k-1}((\Phi_{ij}(x))_{i \le m})$. Note that for
any $k \in \Z_+$, $\Phi^k (\Lam_j) = (Q^k \Lam_j +
(\Dk^k)_{ij})_{i \le m}$ where
\[(\Dk^k)_{ij} = \bigcup_{n_1,n_2,\dots,n_{(k-1)} \le m}
(\Dk_{in_1} + Q \Dk_{n_1 n_2} + \cdots + Q^{k-1} \Dk_{n_{(k-1)} j})
\]
and $\Phi^k (\Lb) = \Lb$.

In \cite[Lemma 3.2]{lawa} it is shown that if $\Lb$ is a
substitution Delone multi-colour set, then there is a finite
multi-colour set (cluster) $\Pb \subset \Lb$ for which
$\Phi^{n-1}(\Pb) \subset \Phi^n(\Pb)$ for $n \ge 1$ and $\Lb =
\lim_{n \to \infty} \Phi^n (\Pb)$. We call such a multi-colour set
$\Pb$ a {\em generating set} for $\Lb$.

\begin{thm}{\em \cite{LMS2}}
Let $\Lb$ be a repetitive primitive substitution Delone multi-colour set in $\R^d$. Then every
$\Lb$-cluster is legal if and only if $\Lb$ is representable.
\end{thm}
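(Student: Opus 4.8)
**The plan is to prove both directions by exploiting the self-affine structure and the limit $\Lb = \lim_n \Phi^n(\Pb)$ for a generating set $\Pb$.**

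For the direction "representable $\Rightarrow$ every cluster legal," I would argue as follows. Suppose $\Lb + \Ak$ is a tiling $\Tk$, and $\Omega(\Tk) = \Tk$. By definition a cluster of $\Lb$ corresponds to a finite subpatch $P$ of $\Tk$ (remembering each point $x\in\Lambda_i$ carries the tile $x+T_i$). Since $\Lb$ is repetitive and primitive, pick a generating set $\Pb$ with $\Phi^{n-1}(\Pb)\subset\Phi^n(\Pb)$ and $\Lb=\bigcup_n\Phi^n(\Pb)$. Every point $y$ of $\Lb$, and hence every finite cluster of $\Lb$, lies inside $\Phi^n(\Pb)$ for $n$ large enough. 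So it suffices to show that $\Phi^n(\Pb)$ is legal for every $n$; but $\Phi^n(\Pb)$ is, up to translation, a subcluster of $\Phi^{n+k}(x)$ for a suitable single point $x\in\Lb$ and $k\ge 1$, because $\Pb$ itself — being finite — sits inside $\Phi^k(x)$ for some $x\in\Lb$ and some $k$ (this is exactly where repetitivity and primitivity let one "grow" any finite cluster from a single seed point; primitivity guarantees every colour appears in every sufficiently high iterate). Applying $\Phi^n$ and using $\Phi^{n}(\Phi^{k}(x))=\Phi^{n+k}(x)$ finishes this direction.

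For the converse, "every cluster legal $\Rightarrow$ representable," the point is to show $\Lb+\Ak$ actually tiles $\R^d$: the tiles cover $\R^d$ and have disjoint interiors. Disjointness of interiors is the heart of the matter. The supports $A_i$ arise from the adjoint system (\ref{eq-til}), so for each point $x\in\Lambda_j$ the patch $\Omega(x+T_j)$ subdivides $Qx+QA_j$ into translated prototiles with disjoint interiors; iterating, $\Omega^k(x+T_j)$ subdivides $Q^k x + Q^k A_j$ consistently, and the combinatorics of which translated tiles appear is governed precisely by $\Phi^k$. Now take any two points $x+T_i$, $y+T_j$ of $\Lb+\Ak$ whose supports overlap in interior; since $Q$ is expansive, $Q^k A_i$ and $Q^k A_j$ grow, and the relative position $Q^k(y-x)$ stays bounded relative to the diameters only if we rescale — instead, use that $x,y$ both lie in $\Phi^k(\Pb)$ for large $k$, hence the pair $\{x+T_i,y+T_j\}$ is a subcluster of $\Phi^k(\Pb)\subset\Phi^{k+\ell}(z)=\Omega^{k+\ell}(z+T_{l(z)})$ translated, which by (\ref{tile-subdiv}) consists of tiles with pairwise disjoint interiors — contradiction. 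Coverage of $\R^d$ follows because $\Lb=\lim_n\Phi^n(\Pb)$ together with expansivity of $Q$ forces the supports of $\Omega^n(z+T_{l(z)})=Q^n z + Q^n A_{l(z)}$ to exhaust $\R^d$, and each such support is tiled by pieces of $\Lb+\Ak$; repetitivity upgrades "exhausts a neighbourhood of every point up to translation" to genuine coverage.

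I expect the main obstacle to be the converse direction's disjoint-interiors claim, specifically the step that turns "legality of every cluster" into a statement about the global patch $\Lb+\Ak$: legality a priori only says each finite cluster \emph{embeds} into some $\Phi^k(x)$, and one must verify these local embeddings are \emph{compatible}, i.e. that the tiles attached via the adjoint solution $A_i$ genuinely have disjoint interiors and are not merely abstractly consistent as point configurations. This requires carefully invoking that the $A_i$ are closures of their interiors (the cited result of \cite{lawa}) and that the subdivision identity (\ref{tile-subdiv}) holds with disjoint interiors at every level, then passing to the limit. Handling the non-lattice, non-unit generality — where the $A_i$ may be fractals — means one cannot appeal to volume/measure shortcuts naively; one should track $\Vol(Q^kA_j) = |\det Q|^k \Vol(A_j)$ against the counting function of $\Phi^k$ to rule out overlaps by a density argument as a robust alternative.
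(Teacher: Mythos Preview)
The paper does not prove this theorem; it merely quotes it from \cite{LMS2} as background, with no accompanying argument. So there is no ``paper's own proof'' to compare against, and your proposal stands or falls on its own merits rather than on agreement with anything in the present article.

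On those merits: your converse direction (legality $\Rightarrow$ representability) has the right skeleton. The key step --- that any two-point cluster $\{x,y\}$ of $\Lb$ is, by legality, a translate of a subcluster of some $\Phi^{k}(z)$, hence the attached tiles $x+A_i$, $y+A_j$ arise as pieces of the single subdivided tile $Q^k z + Q^k A_{l(z)}$ and therefore have disjoint interiors by the iterated tile-subdivision identity --- is exactly what is used in \cite{LMS2}. Your worry about ``compatibility of local embeddings'' is a non-issue once you phrase it this way: disjointness is a pairwise property, and legality of every two-point cluster already suffices. The covering claim is likewise handled in \cite{LMS2} by a density/volume argument (Perron--Frobenius eigenvalue equals $|\det Q|$), which you allude to in your final sentence.

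Your forward direction (representable $\Rightarrow$ legality), however, is muddled. You never actually use representability: the argument you sketch --- find a generating set $\Pb$, embed it in some $\Phi^k(x)$, then iterate --- would apply to \emph{any} primitive substitution Delone multi-colour set, representable or not. The gap is the assertion that the finite generating set $\Pb$ sits inside $\Phi^k(x)$ for a \emph{single} point $x$; primitivity alone does not give this (primitivity tells you every colour appears in every sufficiently high iterate of every colour, but says nothing about the specific relative positions of the points of $\Pb$). What representability buys you is that $\Lb+\Ak$ is a genuine tiling with finite local complexity, and then repetitivity of the \emph{tiling} (inherited from repetitivity of $\Lb$) lets you locate any patch inside a large supertile $\Omega^n(T)$ by the standard hierarchical argument for self-affine tilings. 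You should invoke the tiling structure explicitly here rather than trying to run the argument purely at the level of point sets.
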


On the other hand, if a self-affine tiling $\Tk = \{T_j+ \Lam_j :
j \le m \}$ is given, we get an associated substitution Delone
multi-colour set $\Lb_{\Tk} = (\Lam_i)_{i \le m}$ of $\Tk$ (see
\cite[Lemma\,5.4]{Lee}).


\subsection{Pure point spectrum and overlap coincidence}

Let $\Tk$ be a self-affine tiling in $\R^d$. We define the space
of tilings as the orbit closure of $\Tk$ under the translation
action: $X_{\Tk} = \overline{\{-h + \Tk : h \in \R^d \}}$, in the
well-known ``local topology'': for a small $\epsilon
> 0$ two point sets $\Sk_1, \Sk_2$ are $\epsilon$-close if $\Sk_1$
and $\Sk_2$ agree on the ball of radius $\epsilon^{-1}$ around the
origin, after a translation of size less than $\epsilon$. The
group $\R^d$ acts on $X_{\Tk}$ by translations which are obviously
homeomorphisms, and we get a topological dynamical system
$(X_{\Tk},\R^d)$. Let $\mu$ be an ergodic invariant Borel
probability measure for the dynamical system $(X_{\Tk},\R^d)$. We
consider the associated group of unitary operators $\{U_g\}_{g\in
\R^d}$ on $L^2(X_{\Tk},\mu):$
\[U_g f(\Sk) = f(-g+\Sk).\]
A vector $\alpha =(\alpha_1,\ldots,\alpha_d) \in \R^d$ is said to
be an eigenvalue for the $\R^d$-action if there exists an
eigenfunction $f\in L^2(X_{\Tk},\mu),$ that is, $\ f\not\equiv 0$
and
\[U_g f = e^{2 \pi i g \cdot \alpha} f,\ \ \ \mbox{for all}
\ \ g\in \R^d.\] The dynamical system $(X_{\Tk},\mu,\R^d)$ is said
to have {\em pure point(or pure discrete) spectrum} if the linear
span of the eigenfunctions is dense in $L^2(X_{\Tk}, \mu)$. Recall
that a topological dynamical system of a self-affine tiling is
{\em uniquely ergodic} i.e. there is a unique invariant
probability measure \cite{LMS2}.

\subsection{Overlaps}

Overlap and overlap coincidence are originally defined with tiles
in substitution tilings \cite{soltil}. For computational reason,
we define overlaps with the corresponding representative points of
tiles here. A triple $(u, y, v)$, with $u + T_i, v + T_j \in \Tk$
and $y \in \Xi(\Tk)$, is called an {\em overlap} (or {\em real
overlap}) if
\[ (u+A_i-y)^{\circ} \cap (v+A_j)^{\circ} \neq \emptyset, \]
where $A_i = \supp(T_i)$ and $A_j = \supp(T_j)$. We define
$(u+A_i-y) \cap (v+A_j)$ the {\em support of an overlap} $(u, y,
v)$ and denote it by $\supp(u,y,v)$. We say that two overlaps $(u,
y, v)$ and $(u', y', v')$ are {\em equivalent} if there exists $g
\in \R^d$ such that $u-y = g+u'-y'$ and $v= g+ v'$, where
$u+T_i,u'+T_i \in \Tk$ and $v+T_j, v'+T_j \in \Tk$ for some $1 \le
i,j \le m $. Denote by $[(u, y, v)]$ the equivalence class of an
overlap. An overlap $(u, y, v)$ is a {\em coincidence} if
\[ \mbox{$u-y = v$ and $u + T_i, v + T_i \in \Tk$ for some $i \le m$}.\]
Let $\mathcal{O} = (u, y, v)$ be an overlap in $\Tk$, we define
{\em $k$-th inflated overlap}
\begin{eqnarray*}
 {\Phi}^k \mathcal{O} = \{(u', Q^k y, v') \, :
  u' \in \Phi^k(u), v' \in \Phi^k(v), \ \mbox{and $(u',Q^ky,v')$ is an overlap} \}.
\end{eqnarray*}

\begin{defi} \label{def-overlapCoincidence}
{\em We say that a self-affine tiling $\Tk$ admits an {\em overlap
coincidence} if there exists $\ell \in \Z_+$ such that for each
overlap $\mathcal{O}$ in $\Tk$, ${\Phi}^{\ell} \mathcal{O}$
contains a coincidence.}
\end{defi}

\begin{theorem} \cite{LMS2, Lee}
Let $\Tk$ be a self-affine tiling in $\R^d$ such that $\Xi(\Tk)$
is a Meyer set. Then $(X_{\Tk}, \R^d, \mu)$ has a pure point
dynamical spectrum if and only if $\Tk$ admits an overlap
coincidence.
\end{theorem}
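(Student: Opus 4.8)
The plan is to recognise the statement as the ``pure point spectrum $\Leftrightarrow$ overlap coincidence'' equivalence of Solomyak \cite{soltil} and Lee--Moody--Solomyak \cite{LMS2} (and, on the substitution Delone set side, \cite{Lee}), merely re-expressed through the correspondence between a tile and the representative point carrying it, and then either to quote that equivalence or to re-derive it. I would carry this out in three steps, the first two of which reduce everything to the classical tile-based picture and isolate the role of the Meyer hypothesis, the third being the genuine spectral input.

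First I would set up the dictionary between the point-overlaps of Definition~\ref{def-overlapCoincidence} and the classical tile-overlaps of \cite{soltil}: a triple $\mathcal{O}=(u,y,v)$ with $u+T_i,\,v+T_j\in\Tk$ and $y\in\Xi(\Tk)$ corresponds to the ordered pair of $\Tk$-tiles $(u+T_i,\,v+T_j)$ carrying the relative shift $y$. Under this correspondence $\supp(\mathcal{O})=(u+A_i-y)\cap(v+A_j)$ is the classical overlap support, $\mathcal{O}$ is a coincidence exactly when the tile-overlap is, the two equivalence relations agree so that the class of $\mathcal{O}$ is recorded by $(u-y-v,i,j)$, and by (\ref{tile-equivalence}) and (\ref{supp-of-iterated-tile}) the inflation $\Phi^k\mathcal{O}$ matches the image of the overlapping pair under the tile-substitution $\Omega^k$; in particular $\Phi$ descends to a well-defined map on equivalence classes of overlaps, with the coincidences as absorbing classes. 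Hence Definition~\ref{def-overlapCoincidence} holds for $\Tk$ precisely when $\Tk$ admits overlap coincidence in the classical sense. This step is pure bookkeeping.

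Next I would use the Meyer hypothesis to show that there are only finitely many equivalence classes of overlaps and therefore that the per-overlap exponents $\ell_{\mathcal{O}}$ can be chosen uniformly in $\mathcal{O}$. Since the shifted tile $u+T_i-y$ must meet $v+T_j$ in interior points, $u-y-v$ lies in a fixed bounded set $B$ (of diameter at most twice the maximal tile diameter); writing $u-y-v$ by means of reference points of the relevant colour classes and using $\Lambda_i-\Lambda_i\subset\Xi(\Tk)$ together with the Meyer characterisation $\Xi(\Tk)-\Xi(\Tk)\subset\Xi(\Tk)+F$ for some finite set $F$, one finds that the set of admissible values of $u-y-v$ is uniformly discrete, hence finite inside the bounded window $B$; combined with the $m^{2}$ colour pairs this bounds the number of overlap classes. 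Since $Q\,\Xi(\Tk)\subset\Xi(\Tk)$, $\Phi$ acts on this finite set, so ``for every $\mathcal{O}$ some $\Phi^{\ell_{\mathcal{O}}}\mathcal{O}$ contains a coincidence'' upgrades to a single $\ell=\max_{\mathcal{O}}\ell_{\mathcal{O}}$. I expect this to be short, but it is the place where the Meyer assumption is genuinely used: without it the overlap set may be infinite and no uniform $\ell$ need exist.

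Finally I would supply the spectral half, i.e.\ that a self-affine tiling has pure point dynamical spectrum iff it admits overlap coincidence. For $(\Leftarrow)$ I would argue that overlap coincidence makes the density of the union of supports of the non-coincident overlaps carrying a fixed shift $y$ shrink to $0$ under inflation for every $y\in\Xi(\Tk)$; together with the Meyer property this yields, for each $\epsilon>0$, a relatively dense set of $\epsilon$-almost periods of $\Tk$, hence pure point diffraction, hence --- by \cite{LMS1, Gouere, BL} --- pure point dynamical spectrum; alternatively one approximates cylinder functions in $L^2(X_{\Tk},\mu)$ directly by linear combinations of eigenfunctions manufactured from the coincidence structure. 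For $(\Rightarrow)$ I would use that pure point spectrum makes the factor map of $(X_{\Tk},\R^d,\mu)$ onto its maximal equicontinuous (Kronecker) factor a measure-theoretic isomorphism, and push this through the ergodic self-joining supported on ``overlapping pairs of tilings'' to conclude that in the overlap substitution matrix (finite, by the previous step) every non-coincidence class occurs with asymptotic frequency $0$, i.e.\ overlap coincidence holds. The hard part, and the main obstacle to a fully self-contained argument, is this last implication: it requires a Perron--Frobenius analysis of the overlap substitution together with the joining structure coming from the Kronecker factor, which is exactly the technical core of \cite{soltil, LMS2}.
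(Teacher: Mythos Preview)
The paper does not prove this theorem at all: it is stated with the citation \cite{LMS2, Lee} and used as a black box, so there is no ``paper's own proof'' to compare against. Your proposal goes well beyond what the paper does, by sketching how the result is actually established in the cited sources; the outline you give (dictionary between point- and tile-overlaps, finiteness of overlap classes via the Meyer property, and the spectral equivalence from \cite{soltil,LMS2}) is a faithful summary of that literature, and your identification of the $(\Rightarrow)$ direction as the technical core is accurate.
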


In actual computation, it is not easy to determine whether a given
triple is an overlap, since two points can be very close without
having the interiors of the corresponding tiles meet. So we
introduce a notion of {\it potential overlaps}.

Let $\xi \in \R^d$ be a fixed point under the substitution such
that $\xi \in \Phi(\xi)$. When there is no confusion, we will
identify $\xi$ with a coloured point $(\xi,i)$ in $\Lb_{\Tk}$ with
$\xi \in \R^d$. We find a basis of $\R^d$ \[\mathcal{B} = \{
\alpha_1, \dots, \alpha_d \} \subset \Xi(\Tk)\] such that \be
\label{translates-basis} \xi + \alpha_1, \dots, \xi + \alpha_d \in
\Phi^{\ell}(\xi) \ \ \mbox{for some $\ell \in \Z_+$}. \ee

Let
$$\alpha_{max}:= {\mmax}\{ |\alpha_i| :
\alpha_i \in \mathcal{B} \}. $$ For any $n \in \Z_+$, let
\[ e^{(n)}:= {\mmax} \{|d_{ij} - {d'}_{k\ell}|: d_{ij} \in (\mathcal{D}^n)_{ij}, {d'}_{k\ell} \in (\mathcal{D}^n)_{k\ell},\ \mbox{where} \ 1 \le
i,j,k,\ell \le m\},\]
Let $\Vert \cdot \Vert$ be the operator norm induced by Euclidean norm.
Since $Q$ is an expansive map, we can find $k \in \Z_+$ such that
$$ \Vert Q^{-k}\Vert < 1.$$
Note that for any $v \in \R^d$, \be
\label{find-K-corresponding-to-h}|Q^k v| \ge \frac{1}{\Vert Q^{-k}\Vert}
|v|. \ee Let \be \label{formula-diam} R = \frac{ e^{(k)} \cdot
\Vert Q^{-k}\Vert}{1 - \Vert Q^{-k}\Vert}. \ee
We say that a triple $(u,y,v)$, with $u + T_i, v + T_j \in \Tk$
for some $i,j \le m$ and $y \in \Xi(\Tk)$, is called a {\em
potential overlap} if
\begin{equation}
\label{potential}
|u-y-v| \le R
\end{equation}
and we say that the potential overlap $(u,y,v)$ occurs by the
translation $y$.

\begin{lem}
If $(u, y, v)$ is an overlap, then $(u, y, v)$ is a potential
overlap.
\end{lem}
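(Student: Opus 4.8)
The plan is to reduce the claim to a self-referential size bound on the prototile supports $A_1,\dots,A_m$. Since $(u,y,v)$ is an overlap, the open set $(u+A_i-y)^{\circ}\cap(v+A_j)^{\circ}$ is non-empty; fix a point $z$ in it. Writing $z=u+a-y=v+b$, we have $a\in A_i$, $b\in A_j$, and $u-y-v=b-a$. Hence it suffices to bound $|b-a|$ by $R$, and for that it is enough to establish the uniform estimate
\[ D:=\max\{\,|p-q|\ :\ p\in A_s,\ q\in A_t,\ 1\le s,t\le m\,\}\ \le\ R, \]
the maximum being attained because the $A_s$ are compact and finite in number.

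To estimate $D$, I would iterate the adjoint equation (\ref{tile-subdiv}) $k$ times, obtaining $Q^kA_s=\bigcup_{s'=1}^m\bigl((\Dk^k)_{s's}+A_{s'}\bigr)$ for every $s\le m$. Given $p\in A_s$, the point $Q^kp$ lies in this union, so $Q^kp=d_1+p'$ for some index $s'$, some $d_1\in(\Dk^k)_{s's}$, and some $p'\in A_{s'}$; likewise, for $q\in A_t$ we get $Q^kq=d_2+q'$ with $d_2\in(\Dk^k)_{t't}$ and $q'\in A_{t'}$. Applying $Q^{-k}$ and taking norms,
\[ |p-q|\ =\ \bigl|Q^{-k}(d_1-d_2+p'-q')\bigr|\ \le\ \Vert Q^{-k}\Vert\,\bigl(|d_1-d_2|+|p'-q'|\bigr)\ \le\ \Vert Q^{-k}\Vert\,\bigl(e^{(k)}+D\bigr), \]
where $|d_1-d_2|\le e^{(k)}$ by the definition of $e^{(k)}$ and $|p'-q'|\le D$ by the definition of $D$. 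Taking the supremum over all admissible $p,q$ gives $D\le\Vert Q^{-k}\Vert\,(e^{(k)}+D)$, and since $\Vert Q^{-k}\Vert<1$ this rearranges to $D\le e^{(k)}\Vert Q^{-k}\Vert/(1-\Vert Q^{-k}\Vert)=R$.

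Combining the two steps, $|u-y-v|=|b-a|\le D\le R$, which is precisely (\ref{potential}); since $u+T_i,v+T_j\in\Tk$ and $y\in\Xi(\Tk)$ are part of the hypothesis, $(u,y,v)$ is a potential overlap. I do not expect a genuine obstacle here: this is the standard contraction estimate for graph-directed self-affine attractors, and the only point requiring care is that $e^{(n)}$ is defined with four free indices, so it dominates $|d_1-d_2|$ even though the blocks $(\Dk^k)_{s's}$ and $(\Dk^k)_{t't}$ carry different index pairs — which is exactly how $e^{(n)}$ is set up above.
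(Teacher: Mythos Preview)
Your proof is correct and follows essentially the same approach as the paper: both reduce to bounding $|a-b|$ for $a\in A_i$, $b\in A_j$ via the contraction property of $Q^{-k}$ and the digit bound $e^{(k)}$. The paper expands each point of $A_i$ as an infinite $Q^{-k}$-adic series of digits and sums the resulting geometric series, whereas you obtain the same bound $R$ by the one-step recursion $D\le\Vert Q^{-k}\Vert\,(e^{(k)}+D)$ --- a cosmetic difference only.
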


\begin{proof}
From (\ref{eq-til}), we get
\[ Q^k A_j = \bigcup_{i=1}^m
((\Dk^k)_{ij} + A_i),\ \ \ j \le m. \] For any $i \le m$ and $a \in A_i$, we
can write
\[ a = Q^{-k} d_{i_1 i} + Q^{-2k} d_{i_2 i_1} + \cdots, \ \ \ \mbox{where
$d_{i_{n+1}i_n} \in (\Dk^k)_{i_{n+1} i_n}$.}\] Thus for any $i, j \le m$, $a \in A_i$, and $b \in A_j$,
\[|a - b| \le \sum_{n=1}^{\infty} \Vert Q^{-k}\Vert^n |d_{i_n i_{n-1}} - {d'}_{i_n i_{n-1}}| \le \frac{e^{(k)} \cdot \Vert Q^{-k}\Vert}{1 - \Vert Q^{-k}\Vert}.\]
If $(u,y,v)$ is an overlap where $u + T_i, v + T_j \in \Tk$ for
some $i,j \le m$, then
\[ (u+ A_i -y) \cap (v + A_j) \neq \emptyset.\]
Let $z \in (u+ A_i -y) \cap (v + A_j)$. Then $z-u+y \in A_i$ and
$z-v \in A_j$. So \be |u-y-v|  \le  \frac{e^{(k)} \cdot
\Vert Q^{-k}\Vert}{1 - \Vert Q^{-k}\Vert} = R. \nonumber \ee
\end{proof}

Similarly to the $k$-th iterated overlap, for each potential
overlap $\mathcal{O} = (u, y, v)$ in $\Tk$, we define {\em $k$-th
inflated potential overlap}
\begin{eqnarray*}
 {\Phi}^k \mathcal{O} = \{(u', Q^k y, v') \, :
  u' \in \Phi^k(u), v' \in \Phi^k(v), \ \mbox{and $(u',Q^ky,v')$ is a potential overlap} \}
\end{eqnarray*}
and the equivalence class of $\Phi^k \mathcal{O}$
\[ [{\Phi}^k \mathcal{O}] = \{[\mathcal{O}'] : \mathcal{O}' \in {\Phi}^k \mathcal{O} \}\,.\]





For the computation of overlap coincidence, it is important to
have the Meyer property of $\Xi(\Tk)$. The next theorem gives a
criterion on $Q$ for the Meyer property. A set of algebraic
integers $\Theta = \{\theta_1, \cdots, \theta_r \}$ is a {\em
Pisot family} if for any $1 \le j \le r$, every Galois conjugate
$\gamma$ of $\theta_j$ with $|\gamma| \ge 1$ is contained in
$\Theta$.

\begin{theorem} {\em \cite{LS2}} \label{PisotFamily-MeyerSet}
Let $\Tk$ be a self-affine tiling in $\R^d$ with a diagonalizable
expansion map $Q$. Suppose that all the eigenvalues of $Q$ are
algebraic conjugates with the same multiplicity. Then $\Xi(\Tk)$
is a Meyer set if and only if the set of all the eigenvalues of
$Q$ is a Pisot family.
\end{theorem}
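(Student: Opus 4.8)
The plan is to build a cut‑and‑project scheme for $\Xi(\Tk)$ out of the Galois conjugates of the eigenvalues of $Q$ and to match the Meyer property with contractivity of $Q$ on the resulting internal space. Let $p\in\Z[x]$ be the common minimal polynomial of the eigenvalues of $Q$, $\deg p=n$, with conjugate roots $\lambda_1,\dots,\lambda_n$; by hypothesis the eigenvalues of $Q$ are $\lambda_1,\dots,\lambda_r$, each of the same multiplicity $s$, so $d=rs$. Since $\Tk$ has FLC, the $\Z$-module $M=\langle\Xi(\Tk)-\Xi(\Tk)\rangle_{\Z}$ is finitely generated; after placing a fixed point of $\Omega$ at the origin one may assume $\Lb_{\Tk}$ and all $\Dk_{ij}$ lie in $M$, and the substitution identity $\Lambda_i=\bigsqcup_{j}(Q\Lambda_j+\Dk_{ij})$ gives $QM\subseteq M$, so $Q$ acts on $M$ as an integer matrix and the $\lambda_i$ are algebraic integers. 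The key structural lemma (and this is where diagonalizability and the equal‑multiplicity hypothesis enter) is that on the minimal such $M$ the characteristic polynomial of $Q$ is exactly $p(x)^s$; then $\mathrm{rank}\,M=ns$, $M\otimes\Q$ is free of rank $s$ over the field $K=\Q[x]/(p)$, and the $n$ archimedean places of $K$ give a diagonal embedding $M\hookrightarrow\R^d\times H$ with injective dense first projection and cocompact image $\widetilde L$, where $H$ (of real dimension $(n-r)s$) is the sum of the realified $\lambda_i$-eigenspaces for $r<i\le n$, carrying the induced map $Q_H$ with eigenvalues $\lambda_{r+1},\dots,\lambda_n$. Write $\star\colon M\to H$ for the star map. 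In this language the Pisot family condition on the eigenvalues of $Q$ says exactly that $|\lambda_i|<1$ for $i>r$, i.e.\ that $Q_H$ is a contraction.

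\emph{($\Leftarrow$)} Assume $Q_H$ is a contraction, and fix an adapted norm in which $\|Q_H^k\|\le C\rho^k$ with $\rho<1$. For $x,y$ in the same $\Lambda_i$, iterate the partition $\Lambda_i=\bigsqcup_j(Q\Lambda_j+\Dk_{ij})$ to get, for every $k$, $x-y=Q^k w_k+\sum_{l=1}^{k}Q^{l-1}(\delta_l-\delta_l')$ with $w_k\in\Lb_{\Tk}-\Lb_{\Tk}$ and $\delta_l,\delta_l'$ in the fixed finite digit sets. Since $Q$ is expansive, $\|Q^{-m}\|$ decays geometrically, so $|w_k|\le\|Q^{-k}\|\,|x-y|+C_2\le C_2+1$ once $k$ is large, with $C_2$ independent of $(x,y)$; and FLC makes $(\Lb_{\Tk}-\Lb_{\Tk})\cap B_{C_2+1}$ finite, so for large $k$ the $w_k$ range over a fixed finite set and hence $\|\star(w_k)\|$ is bounded. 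Applying $\star$, $\star(x-y)=Q_H^k\star(w_k)+\sum_{l=1}^{k}Q_H^{l-1}(\star\delta_l-\star\delta_l')$; letting $k\to\infty$ the first term tends to $0$ and the series converges, giving the uniform bound $\|\star(x-y)\|\le 2C(\max_\delta\|\star\delta\|)/(1-\rho)=:R_0$. Thus $\star(\Xi(\Tk)-\Xi(\Tk))$ lies in a ball $W\subset H$, so $\Xi(\Tk)-\Xi(\Tk)\subseteq\pi_{\R^d}(\{\ell\in\widetilde L:\pi_H(\ell)\in W\})$, which is a model set and hence uniformly discrete; since $\Xi(\Tk)$ is relatively dense, it is a Meyer set.

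\emph{($\Rightarrow$)} Assume $S:=\Xi(\Tk)-\Xi(\Tk)$ is uniformly discrete. From $\Omega(\Tk)=\Tk$ and $\Omega(x+T_j)=Qx+\Omega(T_j)$ one gets $Q\,\Xi(\Tk)\subseteq\Xi(\Tk)$, hence $QS\subseteq S$. Since $S$ is relatively dense and spans $\R^d$, the model‑set characterization of Meyer sets gives a cut‑and‑project scheme in which $S$ (hence $\Xi(\Tk)$) is a subset of a model set with a bounded window; because $S$ generates $M$, this scheme is the Galois one above, and boundedness of the window means $\star(S)$ is relatively compact in $H$. If $Q_H$ were not a contraction it would have an eigenvalue $\mu$ with $|\mu|>1$ (the modulus‑one case is treated separately below): then $Q_H^k\star(S)\subseteq\star(S)$ bounded for all $k$ forces every element of $\star(S)$ into the span of the generalized eigenspaces of eigenvalue modulus $\le 1$, a proper subspace of $H$ — contradicting that the window has nonempty interior (by relative density of $\Xi(\Tk)$), so $\star(S)$ is not contained in a proper subspace. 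Hence $|\lambda_i|\le 1$ for $i>r$, and the remaining possibility ``$|\lambda_i|=1$ with $\lambda_i$ not an eigenvalue'' (the Salem‑type case) is excluded because FLC of a self‑affine tiling precludes conjugates of modulus exactly one for the internal action of $Q$; therefore $|\lambda_i|<1$ for all $i>r$, i.e.\ the eigenvalues of $Q$ form a Pisot family.

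\emph{Main obstacle.} The delicate direction is ($\Rightarrow$), and within it two points need care: the structural lemma that the minimal $\Z$-module carried by $\Xi(\Tk)-\Xi(\Tk)$ has $\mathrm{char}(Q|_M)=p(x)^s$ (so that the relevant internal space is precisely the Galois one and an expanding conjugate cannot be concealed in an unrelated internal factor or a $p$-adic component), and the exclusion of Salem‑type conjugates of modulus exactly one, which relies on FLC of the tiling rather than merely on the Meyer property. By contrast, the rank computation in the set‑up and the geometric‑series estimate in the ($\Leftarrow$) direction are routine once the technique of the Lemma preceding the theorem is in hand.
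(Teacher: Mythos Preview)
The paper does not prove this theorem at all; it is quoted from \cite{LS2} (Lee--Solomyak) and used only as a criterion guaranteeing the Meyer hypothesis needed elsewhere. So there is no ``paper's own proof'' to compare your attempt against.

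On the merits of your sketch: the overall architecture---build the Galois cut-and-project scheme on the $\Z$-module $M$ generated by $\Xi(\Tk)-\Xi(\Tk)$, and identify the Pisot-family condition with contractivity of $Q_H$ on the internal space---is the right one, and your $(\Leftarrow)$ argument is essentially sound once the structural lemma on $\mathrm{char}(Q|_M)$ is granted. The $(\Rightarrow)$ direction, however, has a real gap beyond the two you flag. You write that the Meyer property yields \emph{some} cut-and-project scheme with bounded window, and then assert ``because $S$ generates $M$, this scheme is the Galois one above.'' That inference is not valid: Meyer's theorem produces an abstract CPS, not a canonical one, and many inequivalent schemes can have the same physical projection; the internal space need not be your $H$ and could a priori contain $p$-adic factors or other components. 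What you actually need is the direct statement that $\star(S)$ is bounded \emph{in your specific $H$}, and this does not follow from Meyer alone---it is essentially equivalent to what you are trying to prove. The route taken in \cite{LS2} avoids this circularity by going through the eigenvalue group of the tiling dynamical system (relative density of eigenvalues characterises Meyer for FLC repetitive tilings) and a direct analysis of the address map, rather than by invoking the abstract model-set characterisation. Your acknowledged obstacles (the rank lemma and the modulus-one exclusion) are genuine and nontrivial, but this CPS-identification step is the one that, as written, is simply missing an argument.
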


Summarizing the results of this paper, we provide an algorithm to
determine the pure point spectrum of a substitution tiling
dynamical system.

\noindent\hrulefill \\
{\bf Algorithm} :  We assume that $\Tk$ is a self-affine tiling in
$\R^d$ with expansion map $Q$ for which $\Xi(\Tk)$ is a Meyer set
and $T_i = (A_i, i)$, $i \le m$, are prototiles such that
\[ QA_j = \bigcup_{i \le m} (\mathcal{D}_{ij} + A_i) \ \ \ \mbox{for $j \le m$}.\]

\begin{itemize}
\item {\bf Input:} $\Phi$ is an $m \times m$ matrix whose each
    entry is a set of functions from $\R^d$ to $\R^d$ such
    that $\Phi = (\Phi_{ij})$, where $\Phi_{ij} = \{f:x \to
    Qx+d, d \in \mathcal{D}_{ij}\}$, i.e. $\Phi$ is a MFS for
    $\Tk$.
\item {\bf Output:} True, if and only if $\Tk$ has pure point
    spectrum.
\end{itemize}

\vspace{2mm}

\begin{enumerate}
\item Find an initial point $x$ such that $x \in \Phi(x)$.
\item Find a basis $\{\alpha_1, \dots, \alpha_d \} \subset
    \R^d$ such that $\alpha_k \in \bigcup_{i \le m}
    ((\Phi^n(x))_i - (\Phi^n(x))_i)$ for some $n \in \Z_+$,
    for each $1 \le k \le d$.
\item For each $1 \le k \le d$, find all the potential
    overlaps $\mathcal{G}_{\alpha,0}$ which occur from the
    translation $\alpha_k$.
\item Find all the potential overlaps $\mathcal{G}$ which
    occur from the translations $Q^n \alpha_k$ with $1 \le k
    \le d$ and $n \in \Z_+$.
\item Find all the potential overlaps $\mathcal{G}_{coin}$
    which lead to coincidences within $\sharp \Gk$-iterations.
\item If $\rho(\Gk_{coin}) > \rho(\Gk \, \backslash \,
    \Gk_{coin})$, where $\rho(G)$ is the spectral radius of
    the graph $G$, return true. Else, return false.
\end{enumerate}
\vspace{-2.0mm}
 \noindent\hrulefill


\section{Computing coincidence} \label{Computing coincidence}

\noindent In the rest of the paper, we assume that $\Tk$ is a
self-affine tiling in $\R^d$ such that $\Xi(\Tk)$ is a Meyer set.
From (\ref{tile-subdiv}), for any $a_i \in A_i$ with $i \le m$, we
can get \be \label{new-tile-equation} Q (A_j - a_j) =
\bigcup_{i=1}^m (\Dk_{ij} - Q a_j + a_i +(A_i - a_i)) \ \ \
\mbox{for} \  j\le m. \ee We may consider new prototiles
\[\{T_1-a_1, \dots, T_m -a_m\}\] with new digit sets
\be \label{new-digit-sets} \Dk'_{ij} = \Dk_{ij} - Qa_j + a_i. \ee
Without loss of generality we can assume that for any $T_i = (A_i,
i) \in \Ak$, $i \le m$, \[ 0 \in A_i.\]

\subsection{Meyer sets}

Let $\Lam$ be a Meyer set and $[\Lam]$ be the Abelian group
generated by $\Lam$. Then $[\Lam]$ is finitely generated. So
$[\Lam] = \oplus_{i=1}^s \Z e_i$. We define $||\cdot|| : [\Lam]
\to \N$ such that $||\sum_{i=1}^s a_i e_i|| = \sum_{i=1}^s |a_i|$.
For each positive integer $n$, let
\[F(n):= \{u \in [\Lam] \, : \, ||u|| \le n\}.\]
Note that $F(n)$ is finite. Choose $h > 0$ such that every open
ball of radius $h$ in $\R^d$ meets at least one element in $\Lam$.
Since $\Lam - \Lam := \{x - y : x, y \in \Lam \}$ is uniformly
discrete from the Meyer property of $\Lam$, we let $L \in \Z_+$ be
an upper bound for the number of points in $\Lam - \Lam $ that can
lie in an open ball of radius $2h$. Let
\[ \ell:= {\mmax} \{||u|| \, : \, u \in \Lam - \Lam, |u| < 3 h \} \,.\]

\begin{prop}{\em \cite{Lag, RVM97}} \label{almostLatticeProperty}
Let $\Lam$ be a Meyer set. Then \[\Lam - \Lam \subset \Lam + F, \
\ \ \mbox{where $F= F(2 \ell(L-1))$}.\]
\end{prop}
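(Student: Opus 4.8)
The plan is to exploit the Meyer property in the form that guarantees a uniform bound on the combinatorial length of differences, and to ``chain'' short differences using the pigeonhole principle. First I would record the elementary fact that, since every open ball of radius $h$ meets $\Lam$, any element $w \in [\Lam]$ can be written as $w = x + r$ with $x \in \Lam$ and $|r|$ controlled: more precisely, given a point whose distance to the origin one wants to decrease, one subtracts off a nearby point of $\Lam$. The key quantitative input is that $\Lam - \Lam$ is uniformly discrete, so only boundedly many of its elements (at most $L$) lie in any ball of radius $2h$; this is what converts a geometric statement into a statement about the integer norm $||\cdot||$.

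Next I would argue as follows. Take $u \in \Lam - \Lam$, say $u = p - q$ with $p, q \in \Lam$; I want to show $u \in \Lam + F(2\ell(L-1))$, i.e. there is $x \in \Lam$ with $||u - x|| \le 2\ell(L-1)$. Consider the segment from $q$ to $p$ in $\R^d$, and pick a chain of points $z_0 = q, z_1, \dots, z_N = p$ along it with consecutive points within distance, say, $h$ of each other, where $N$ is proportional to $|u|/h$. For each intermediate $z_t$, choose a point $x_t \in \Lam$ within distance $h$ (possible by the covering radius assumption), with $x_0 = q$ and $x_N = p$. Then each consecutive difference $x_{t+1} - x_t$ lies in $\Lam - \Lam$ and has norm $|x_{t+1} - x_t| < 3h$, hence $||x_{t+1} - x_t|| \le \ell$. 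Telescoping, $u = \sum_{t=0}^{N-1}(x_{t+1} - x_t)$, which at first only gives $||u|| \le N\ell$ — not yet uniform. The improvement comes from a pigeonhole argument on the partial sums $s_t := x_t - q \in \Lam - \Lam$: if the chain is long, two partial sums $s_a, s_b$ with $a < b$ must be close (within $2h$), because there are only $L$ possible values of an element of $\Lam - \Lam$ inside any given ball of radius $2h$ along the bounded-width tube around the segment; one then excises the loop between $a$ and $b$, replacing it with the single short difference $s_b - s_a$ (norm $\le \ell$, since $|s_b - s_a| < 3h$ can be arranged) without changing the endpoint. Iterating this excision, one reduces to a chain in which the partial sums are all distinct modulo the ``$2h$-separation'', so its length is at most $2(L-1)$ (a factor $2$ to be safe about counting), giving $||u|| \le 2\ell(L-1)$. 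Equivalently, writing $x = p - q$ directly shows $u - 0 \in F(2\ell(L-1))$ once $u$ itself is short; for long $u$ one peels off a genuine point of $\Lam$ first.

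Cleaner still: I would phrase the whole thing as a single induction on $||u||$, or rather on $|u|$. If $|u| < 3h$ then $||u|| \le \ell \le 2\ell(L-1)$ and we are done with $x = 0 \in \Lam$ — wait, $0 \in \Lam$ need not hold, but $0 \in \Lam - \Lam$ and we only need $x \in \Lam$; so instead, if $u = p - q$ we note $p \in \Lam$ and $u - p = -q \in -\Lam \subset \Lam - \Lam$ with small norm only if $q$ is near the origin, which we cannot assume. The correct reduction is the chaining one above: so the real content is the pigeonhole excision producing a representation of $u$ as a sum of at most $2(L-1)$ differences each of norm $\le \ell$, plus one honest lattice point. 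I would cite \cite{Lag, RVM97} for the statement and give this chaining-plus-pigeonhole argument as the proof.

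The main obstacle is getting the bound on the \emph{number} of excision steps to come out as exactly $2(L-1)$ (hence the final constant $2\ell(L-1)$) rather than something merely finite: this requires carefully setting up the pigeonhole so that the $L$-point bound in a ball of radius $2h$ is applied to the \emph{correct} finite set — namely the partial sums lying in a common ball — and arguing that after full reduction no two partial sums can be within $2h$ of each other, so the reduced chain has at most $L$ distinct ``cells'' and hence at most $2(L-1)$ edges. The geometric bookkeeping (choosing $h$, $2h$, $3h$ consistently so that ``consecutive partial sums within $h$'' forces ``some pair within $2h$'' and ``excised difference within $3h$'') is routine but must be done with care.
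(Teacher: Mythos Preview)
First, the paper does not supply its own proof of this proposition; it is simply quoted from \cite{Lag, RVM97}. So there is no in-paper argument to compare your sketch against.

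As for the sketch itself: the chaining construction is sound and is a natural opening move --- you correctly obtain $u=\sum_t (x_{t+1}-x_t)$ with each summand in $(\Lam-\Lam)\cap B_{3h}$, hence of $\|\cdot\|$-norm at most $\ell$. The genuine gap is the pigeonhole/excision step. You claim that a long chain forces two partial sums $s_a,s_b$ to lie within $2h$ of one another, invoking the bound $L$ on $\#\bigl((\Lam-\Lam)\cap B_{2h}(y)\bigr)$. But the partial sums $s_t=x_t-q$ are strung out along the entire segment from $0$ to $u$, inside a tube of radius $h$ and \emph{length} $|u|$. The bound $L$ is purely local --- it limits how many $s_t$ can crowd into a single ball of radius $2h$ --- and says nothing about points dispersed over a tube of unbounded length. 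Covering the tube by such balls uses roughly $|u|/h$ of them, so excision at best reduces $N$ to something of order $|u|$, not to the uniform $2(L-1)$ you need. After ``full reduction'' you would only know that the surviving partial sums are pairwise $\ge 2h$ apart, and this places no $L$-related bound on their number: $\Lam-\Lam$ contains infinitely many $2h$-separated points. Thus your argument yields only $\|u\|\le N\ell$ with $N\sim |u|/h$, not a uniform bound, and there is no evident way to extract an $x\in\Lam$ with $\|u-x\|\le 2\ell(L-1)$ from the chain alone. The missing idea is to pigeonhole a \emph{bounded} auxiliary quantity attached to each step (not the partial sums themselves in the Euclidean metric); this is where the constants $\ell$ and $L$ actually enter in the cited proofs.
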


\medskip

It is proved in \cite[Lemma\,A.8]{LMS2} that the number of
equivalence classes of overlaps for a tiling which has the Meyer
property is finite. We apply the same argument to get the number
of equivalence classes of potential overlaps for a tiling and give
an explicit upper bound for the number.

\begin{lem} \label{number-of-overlaps}
Let $\Tk$ be a self-affine tiling and $\Lb_{\Tk} = (\Lam_i)_{i \le
m}$ be the associated substitution Delone multi-colour set of
$\Tk$. Let $\Lam = \bigcup_{i \le m} \Lam_i$. Suppose that $\Lam$
is a Meyer set. The number of equivalence classes of potential
overlaps for $\Tk$ is less than or equal to $m^2 I$, where \[ I =
\#((\Lam + F + F + F) \cap B_R(0)),\] with $F = F(2 \ell (L -1))$
as in Prop.\,\ref{almostLatticeProperty}.
\end{lem}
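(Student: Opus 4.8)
The plan is to bound the number of equivalence classes of potential overlaps by counting, inside a single equivalence class, a canonical representative and showing the representatives land in a set whose cardinality is controlled by $m^2 I$. Recall a potential overlap is a triple $(u,y,v)$ with $u+T_i, v+T_j\in\Tk$, $y\in\Xi(\Tk)$, and $|u-y-v|\le R$. Two such triples are equivalent when they differ by a common translation $g$ acting on $u-y$ and on $v$. So within an equivalence class I may normalize by choosing $g=-v$, i.e. pick the representative of the form $(u-y-v,\,0,\,0)$ — more precisely, the class is determined by the colour $i$ of the tile at $u$, the colour $j$ of the tile at $v$, and the single vector $w:=u-y-v\in\R^d$. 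Thus the number of equivalence classes is at most $m^2$ times the number of vectors $w$ that can arise this way, and by (\ref{potential}) every such $w$ satisfies $|w|\le R$, so $w\in B_R(0)$.

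The substance is then to show that every admissible difference vector $w=u-y-v$ lies in $\Lam+F+F+F$ (when restricted to $B_R(0)$), so that the count of possible $w$ is at most $I=\#((\Lam+F+F+F)\cap B_R(0))$. Here I would use the Meyer structure as recorded in Proposition~\ref{almostLatticeProperty}. Normalizing as in (\ref{new-tile-equation})--(\ref{new-digit-sets}) so that $0\in A_i$ for every $i$, the representative points $u,v$ of tiles of $\Tk$ lie in $\Lam=\bigcup_{i\le m}\Lam_i$, and $y\in\Xi(\Tk)$. Since $\Xi(\Tk)\subset\Lam-\Lam$ (a translation vector between two $\Tk$-tiles of the same colour is a difference of two points of the corresponding $\Lam_i$, after the normalization $0\in A_i$), Proposition~\ref{almostLatticeProperty} gives $\Xi(\Tk)\subset\Lam-\Lam\subset\Lam+F$. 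Therefore $w=u-y-v\in \Lam-(\Lam+F)-\Lam=\Lam-\Lam-\Lam-F$. Applying the proposition again twice to absorb the two extra copies of $\Lam$, $\Lam-\Lam\subset\Lam+F$ yields $\Lam-\Lam-\Lam\subset\Lam+F+F$ (peel one difference at a time), hence $w\in\Lam+F+F+F$. Intersecting with $B_R(0)$ bounds the number of possible $w$ by $I$, and multiplying by the $m^2$ colour choices gives the claimed bound $m^2I$.

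I expect the main obstacle to be the bookkeeping that makes ``the equivalence class is determined by $(i,j,w)$'' precise: one must check that the normalization $0\in A_i$ is legitimate (it is, by (\ref{new-tile-equation})--(\ref{new-digit-sets})), that $u\in\Lam_i$ and $v\in\Lam_j$ under this normalization (so that $\Tk=\Lb_\Tk+\Ak$ identifies tile positions with Delone points), and that $y\in\Xi(\Tk)\subset\Lam-\Lam$. A second, minor, point is iterating Proposition~\ref{almostLatticeProperty}: each application replaces a difference $\Lam-\Lam$ by $\Lam+F$, and since $0\in F$ one checks $\Lam+F-\Lam\subset\Lam-\Lam+F\subset(\Lam+F)+F=\Lam+F+F$, and once more to reach $\Lam+F+F+F$; this is routine set arithmetic given the proposition. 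Everything else — the distance bound $|w|\le R$ from (\ref{potential}), finiteness of $F$ and hence of $I$ — is immediate, so the proof is essentially a careful assembly of the normalization step, the identification of the class invariant $(i,j,w)$, and three applications of the Meyer almost-lattice property.
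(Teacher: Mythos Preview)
Your argument is correct and follows essentially the same route as the paper: identify the equivalence class by the triple $(i,j,w)$ with $w=u-y-v$, note $|w|\le R$, observe that $w\in(\Lam-\Lam)-(\Lam-\Lam)$ (since $u,v\in\Lam$ and $y\in\Xi(\Tk)\subset\Lam-\Lam$), and then iterate Proposition~\ref{almostLatticeProperty} to land in $\Lam+F+F+F$. The paper compresses the set arithmetic into the single chain $(\Lam-\Lam)-(\Lam-\Lam)\subset(\Lam+F)-(\Lam+F)\subset\Lam+F+F+F$, but this is the same computation you carry out; the only minor remark is that the final step uses $F=-F$ (which holds by the definition of $F(n)$ via $\|\cdot\|$) rather than $0\in F$.
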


\proof Let $(u, y, v)$ be a potential overlap in $\Tk$ for which
$u + T_i, v + T_j \in \Tk$. Then $|u - y - v| \le R$. Note that $u
- y - v \in (\Lam - \Lam) - (\Lam - \Lam)$. From
Prop.\,\ref{almostLatticeProperty},
\begin{eqnarray*}
(\Lam - \Lam) - (\Lam - \Lam) & \subset &  \Lam + F - (\Lam + F) \\
& \subset & \Lam + F + F + F\,.
\end{eqnarray*}
The equivalence classes of the potential overlaps are completely
determined by $i$, $j$ and the vector $u - y - v$. Thus the claim
follows. \qed

\medskip

\noindent {\bf Remark} 1. Let $\Lb = (\Lam_i)_{i \le m}$ be a
substitution Delone multi-colour set for which $\Lam = \cup_{i \le
m} \Lam_i$ is a lattice. It has been shown in \cite{Sing} that the
modular coincidence, which is equivalent to overlap coincidence in
lattice substitution Delone multi-colour sets, can be determined
within the exponential bound of $2^m-m-2$. Note that there are
only $m^2 I$ number of potential overlaps where $I = \#(\Lam \cap
B_R(0))$, since $\Lam - \Lam = \Lam$. Notice that $I$ is fairly
small number in the case of lattice substitution Delone
multi-colour set. Overlap coincidence of $\Tk_{\Lbs}$ can be
determined within ($m^2 I + 1$) number of iterations of each
potential overlap. This polynomial iteration bound is much smaller
than the exponential iteration bound given in \cite{Sing}.

2. We note from \cite[Th.\,4.14]{LS} that $\Lam$ is a Meyer set if
and only if $\Xi(\Tk)$ is a Meyer set in the self-affine tiling
$\Tk$.

\subsection{Coincidence and computation}

From now on, we assume that $\Tk$ is a self-affine tiling with an
expansion map $Q$ for which $\Xi(\Tk)$ is a Meyer set.

\medskip

For $\alpha \in \Xi(\Tk)$, define
\begin{eqnarray*}
\mathcal{E}_{\alpha} := \{(u, Q^n \alpha, v) \,: \,
 \mbox{$(u,Q^n \alpha,v)$ is overlap in $\Tk$, $n \in \N$ }\}.
\end{eqnarray*}
For any $n \in \Z_{\ge 0}$, define
\[D_{Q^n \alpha} := \Tk \cap (\Tk - Q^n \alpha) \]
and
\[\dens(D_{Q^n \alpha}) = \lim_{n \to \infty} \frac{\Vol(D_{Q^n \alpha} \cap B_n)}{\Vol(B_n)}\,.\]
\medskip

The following lemma is proved in \cite[Lemma\, A.9]{LMS2} with the
subdivision graph for overlaps.

\begin{lem}{\em \cite[Lemma\, A.9]{LMS2}} \label{equivalence-of-coincidence-for-alpha}
Let $\alpha \in \Xi(\Tk)$. The following are equivalent:
\begin{itemize}
\item[(i)] $\lim_{n \to \infty} \dens(D_{Q^n \alpha}) = 1$.
\item[(ii)] $1 - \dens(D_{Q^n \alpha}) \le b r^n$ for any $n
    \ge 1$, for some $b > 0$ and $r \in (0,1)$.
\item[(iii)] There exists $\ell \in \Z_+$ such that for any
    overlap $\mathcal{O}$ in $\mathcal{E}_{\alpha}$,
    ${\Phi}^{\ell} \mathcal{O}$ contains a coincidence.
\end{itemize}
\end{lem}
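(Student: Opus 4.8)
The plan is to prove the chain of equivalences (i) $\Rightarrow$ (iii) $\Rightarrow$ (ii) $\Rightarrow$ (i), following the structure of the cited argument in \cite[Lemma A.9]{LMS2} but phrased in terms of the potential overlap graph (the ``subdivision graph for overlaps''). The key object is a finite directed graph whose vertices are the equivalence classes of overlaps occurring with translations of the form $Q^n\alpha$ (finitely many, by the Meyer property and Lemma \ref{number-of-overlaps}), and whose edges record the inflation rule $\mathcal{O}\mapsto \Phi\mathcal{O}$, counted with multiplicity according to how many sub-overlaps a given inflated overlap produces. The density $\dens(D_{Q^n\alpha})$ decomposes as a sum over overlap classes of (relative volume of overlap support) weighted by frequency, and because $Q$ is expansive with uniquely ergodic hull, these volumes scale by $|\det Q|^{-1}$ under one inflation step while the number of descendant overlaps is governed by the adjacency structure of the graph.

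First I would set up the finite graph and record that a vertex is a ``coincidence vertex'' exactly when its overlap support has full relative volume (equivalently $u-y=v$), and that once an inflated overlap hits a coincidence it stays a coincidence forever — coincidences are absorbing. For (iii) $\Rightarrow$ (ii): if every overlap in $\mathcal{E}_\alpha$ reaches a coincidence within $\ell$ steps, then the proportion of ``mass'' (in the frequency-weighted sense) still sitting at non-coincidence vertices decays by a fixed factor $<1$ every $\ell$ steps, because at each non-coincidence vertex a definite fraction of descendants are coincidences after $\ell$ further inflations; iterating gives geometric decay $1-\dens(D_{Q^n\alpha})\le b r^n$. For (ii) $\Rightarrow$ (i) is immediate. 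For (i) $\Rightarrow$ (iii): if $\dens\to 1$ but some overlap $\mathcal{O}$ never reaches a coincidence under any $\Phi^\ell$, then by finiteness of the graph $\mathcal{O}$ lies on (or feeds into) a cycle of non-coincidence vertices; the frequency-weighted volume of non-coincidence overlap supports reachable from that cycle stays bounded below by a positive constant for all $n$, contradicting $\dens(D_{Q^n\alpha})\to 1$. This uses that overlap frequencies are strictly positive (primitivity) and that the relative volume of a non-coincidence overlap support is strictly less than full.

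The technical heart — and the step I expect to be the main obstacle — is the bookkeeping that relates $\dens(D_{Q^n\alpha})$ to the weighted counting on the graph, i.e.\ establishing that
\[
\dens(D_{Q^n\alpha}) \;=\; \frac{1}{\Vol(\text{fundamental domain})}\sum_{[\mathcal{O}]} \mathrm{freq}([\mathcal{O}],Q^n\alpha)\,\Vol(\supp\mathcal{O})
\]
converges and behaves correctly under inflation, together with the volume-scaling identity $\Vol(\supp\Phi\mathcal{O}) $ summed over descendants equals $|\det Q|\,\Vol(\supp\mathcal{O})$ (which follows from \eqref{tile-subdiv} / \eqref{supp-of-iterated-tile}, since $Q$ maps the support of an overlap onto the disjoint union of the supports of its inflated sub-overlaps). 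Once this dictionary is in place, unique ergodicity of $(X_\Tk,\R^d)$ guarantees the frequencies exist and are given by the Perron eigenvector data, and the three implications become essentially formal statements about an absorbing set in a finite primitive-ish graph. I would lean on the fact that the corresponding argument is already carried out in \cite[Lemma A.9]{LMS2} for the overlap subdivision graph and simply note that the passage to potential overlaps changes nothing here, since Lemma \ref{number-of-overlaps} still provides finiteness and genuine overlaps sit inside the potential overlap graph as a sub-collection; the density argument only ever refers to genuine overlaps, so the statement and its proof transfer verbatim.
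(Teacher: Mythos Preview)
The paper does not give its own proof of this lemma at all: it is simply quoted as \cite[Lemma~A.9]{LMS2}, with the sentence ``The following lemma is proved in \cite[Lemma~A.9]{LMS2} with the subdivision graph for overlaps'' and no further argument. So there is nothing in the paper to compare your proposal against beyond that citation.

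Your sketch is a faithful outline of the argument one finds in \cite{LMS2}: the finite subdivision graph on overlap classes, the absorbing nature of coincidences, the geometric decay for (iii)~$\Rightarrow$~(ii), and the cycle-of-non-coincidences obstruction for (i)~$\Rightarrow$~(iii). One small point: your closing paragraph about passing to \emph{potential} overlaps is unnecessary here. The lemma, and its proof in \cite{LMS2}, concerns only the real overlaps in $\mathcal{E}_\alpha$; finiteness of their equivalence classes under the Meyer hypothesis is already available (cf.\ \cite[Lemma~A.8]{LMS2}), so the potential-overlap machinery from the present paper plays no role in this particular statement.
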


The next theorem is basically in \cite{soltil} and
\cite[Th.\,4.7]{LMS2}. We notice here that we only need to
consider the overlaps in $\mathcal{E}_{\alpha}$ for all $\alpha
\in \mathcal{B}$ to check the overlap coincidence of $\Tk$. We
rewrite the theorem in the form that we use here.

\begin{thm}{\em \cite{soltil}, \cite[Th.\,4.7]{LMS2}} \label{coincidence-check-on-finite-basis}
Let $\Tk$ be a self-affine tiling for which $\Xi(\Tk)$ is a Meyer
set. Then for any $\alpha \in \mathcal{B}$ and any overlap
$\mathcal{O} \in \mathcal{E}_{\alpha}$, $\Phi^{\ell} \mathcal{O}$
contains a coincidence for some $\ell = \ell(\mathcal{O}) \in \N$
if and only if $\Tk$ admits an overlap coincidence.
\end{thm}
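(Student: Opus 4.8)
The plan is to deduce Theorem \ref{coincidence-check-on-finite-basis} from Lemma \ref{equivalence-of-coincidence-for-alpha} together with the observation that the density condition (i) of that lemma, when verified along the finitely many directions $\alpha\in\mathcal{B}$, propagates to all of $\Xi(\Tk)$. One direction is immediate: if $\Tk$ admits an overlap coincidence, then by Definition \ref{def-overlapCoincidence} there is a uniform $\ell$ working for \emph{every} overlap of $\Tk$, so in particular for every overlap in $\mathcal{E}_\alpha$; this proves the ``only if'' part with no work. The content is in the converse.

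For the converse, suppose for each $\alpha\in\mathcal{B}$ and each overlap $\mathcal{O}\in\mathcal{E}_\alpha$ there is $\ell(\mathcal{O})$ with $\Phi^{\ell(\mathcal O)}\mathcal O$ containing a coincidence. First I would invoke Lemma \ref{number-of-overlaps}: since $\Xi(\Tk)$ is Meyer, there are only finitely many equivalence classes of (potential, hence in particular real) overlaps, and $\ell(\mathcal O)$ depends only on the class of $\mathcal O$; consequently for each fixed $\alpha$ one may take a common $\ell_\alpha$ for all $\mathcal O\in\mathcal E_\alpha$, and then — finiteness of $\mathcal B$ — a common $\ell_0$ that works simultaneously for all $\alpha\in\mathcal B$ and all $\mathcal O\in\mathcal E_\alpha$. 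By the equivalence (iii)$\Rightarrow$(i)$\Rightarrow$(ii) of Lemma \ref{equivalence-of-coincidence-for-alpha}, this yields $1-\dens(D_{Q^n\alpha_i})\le b r^n$ for each basis vector $\alpha_i$, uniformly, with a single $b>0$ and $r\in(0,1)$.

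Next I would promote this estimate from the basis vectors $\alpha_i$ to an arbitrary $\alpha\in\Xi(\Tk)$. The key point, which is standard in this circle of ideas (see \cite{soltil, LMS2}), is subadditivity of the ``defect'' $1-\dens(D_{\gamma})$ in the argument $\gamma$: from $D_{\gamma_1+\gamma_2}\supseteq D_{\gamma_1}\cap\big(D_{\gamma_2}-\gamma_1\big)$ one gets $1-\dens(D_{\gamma_1+\gamma_2})\le \big(1-\dens(D_{\gamma_1})\big)+\big(1-\dens(D_{\gamma_2})\big)$ (translation-invariance of the density is used here, which holds by unique ergodicity). Writing $\xi+\alpha_i\in\Phi^{\ell}(\xi)$ and using that the inflated structure is governed by $Q$, the vectors $Q^n\alpha_i$ lie in $\Xi(\Tk)$ and, because $\mathcal B$ is a basis, a general element of the relevant lattice-like set $[\Xi(\Tk)]$ is an integer combination of the $\alpha_i$ with controlled coefficients; applying subadditivity and the exponential bounds along the $\alpha_i$ gives $\lim_{n\to\infty}\dens(D_{Q^n\alpha})=1$ for every $\alpha\in\Xi(\Tk)$. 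Then Lemma \ref{equivalence-of-coincidence-for-alpha}, (i)$\Rightarrow$(iii), applied to each such $\alpha$, together with one more appeal to finiteness of overlap classes (to make the resulting $\ell$ uniform over all overlaps, not just those in a single $\mathcal E_\alpha$), produces the single $\ell$ required by Definition \ref{def-overlapCoincidence}, i.e.\ $\Tk$ admits an overlap coincidence.

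The main obstacle I anticipate is the passage from the finite basis $\mathcal B$ to all of $\Xi(\Tk)$: one must be careful that the ``defect'' estimate is genuinely subadditive and translation-invariant (this is where the Meyer property and unique ergodicity are both essential — without Meyer, $\Xi(\Tk)-\Xi(\Tk)$ need not be locally finite and the bookkeeping of overlap classes collapses), and one must check that the scaling by $Q^n$ interacts correctly, namely that $\dens(D_{Q^n(\gamma_1+\gamma_2)})$ can be compared to $\dens(D_{Q^n\gamma_1})$ and $\dens(D_{Q^n\gamma_2})$ for the \emph{same} $n$. Once these compatibility points are nailed down, the rest is assembling the quoted lemmas, and I would lean on \cite[Th.\,4.7]{LMS2} and \cite{soltil} for the precise form of the subadditivity argument rather than reproving it here.
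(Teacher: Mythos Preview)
Your outline shares the opening move with the paper: use Lemma~\ref{equivalence-of-coincidence-for-alpha} to pass from ``every overlap in $\mathcal{E}_\alpha$ eventually hits a coincidence'' to the exponential decay $1-\dens(D_{Q^n\alpha})\le br^n$ for each $\alpha\in\mathcal B$. After that the paper does not attempt your direct subadditivity extension to all of $\Xi(\Tk)$; instead it observes $\sum_n(1-\dens(D_{Q^n\alpha}))<\infty$ for each basis vector, invokes \cite[Thm.~6.1]{soltil} (which needs only a real basis of $\R^d$) to conclude pure point spectrum, and then cites \cite[Thm.~4.7, Lemma~A.9]{LMS2} to come back to overlap coincidence.

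Your direct route has a genuine gap at the step ``a general element of $[\Xi(\Tk)]$ is an integer combination of the $\alpha_i$ with controlled coefficients.'' The set $\mathcal B=\{\alpha_1,\dots,\alpha_d\}$ is chosen to be an $\R$-basis of $\R^d$ inside $\Xi(\Tk)$, not a $\Z$-generating set for $[\Xi(\Tk)]$; in the Meyer (non-lattice) case $[\Xi(\Tk)]$ is a finitely generated group of rank typically strictly larger than $d$, so most $\alpha\in\Xi(\Tk)$ are \emph{not} in $\Z\alpha_1+\cdots+\Z\alpha_d$. The subadditivity inequality $1-\dens(D_{\gamma_1+\gamma_2})\le(1-\dens(D_{\gamma_1}))+(1-\dens(D_{\gamma_2}))$ is correct, but it only lets you reach the $\Z$-span of $\mathcal B$, not all of $\Xi(\Tk)$. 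This is precisely why the paper (and Solomyak's original argument) detours through the spectral characterization: the summability condition for an $\R$-basis suffices to produce a full set of eigenfunctions, and then the equivalence pure point $\Leftrightarrow$ overlap coincidence closes the loop. Your final hedge---leaning on \cite{soltil} and \cite[Th.~4.7]{LMS2}---in effect reverts to the paper's argument, but as written the explicit mechanism you propose does not go through.
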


\proof We only prove the sufficiency direction, since the other
direction is clear. Suppose that for any $\alpha \in \mathcal{B}$
and any overlap $\mathcal{O} \in \mathcal{E}_{\alpha}$,
$\Phi^{\ell} \mathcal{O}$ contains a coincidence. From the
argument of \cite[Lemma\,A.9]{LMS2}, for some $b > 0$ and $r \in
(0,1)$
\[ 1 - \dens(D_{Q^n \alpha}) \le b r^n \ \ \ \mbox{for any $n \in \N$}\,.\]
Hence
\[ \sum_{n=0}^{\infty} (1 - \dens(D_{Q^n \alpha})) < \infty \,.\]
Since $\mathcal{B}$ forms a basis for $\R^d$, by
\cite[Thm.\,6.1]{soltil} the dynamical system of Delone
multi-colour set has pure point spectrum. By \cite[Thm.\,4.7 and
Lemma\,A.9]{LMS2}, $\Tk$ admits an overlap coincidence. \qed

\medskip

\noindent
In order to find first all equivalent classes of potential
overlaps which occur from the translations of $\alpha_i$ for any
$1 \le i \le d$, we want to know how much region of the
intersection of $\Tk$ and $\Tk - \alpha_i$, $1 \le i \le d$, we
have to look. We use same notations for points with colour in
$\Lb_{\Tk}$ and points in $\R^d$. This should not cause any
confusion.

Let
\[ \Theta = \{[\Pb] : \Pb = \{y, z\} \subset \Lb_{\Tk} \ \ \mbox{satisfies}
\ \ |y - z| < R + \Vert Q^{-k}\Vert \alpha_{max} \}.
\]
Let
\[\mathcal{J}(\Gb) := \{[\Pb] \in \Theta: \Pb \subset \Gb \},
\ \ \ \mbox{where $\Gb \subset \Lb_{\Tk}$}\,.\]

\begin{lem}
\label{init0}
If $\Jk(\Phi^N(\xi)) = \Jk(\Phi^{N+k}(\xi))$ for some $N \in
\Z_+$, then
\[\Jk(\Phi^N(\xi)) = \Jk(\Lb_{\Tk}) \,.\]
\end{lem}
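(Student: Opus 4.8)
We want to show that once the set of (equivalence classes of) "close pairs" $\mathcal J(\Phi^N(\xi))$ stabilizes under one more application of $\Phi^k$ — i.e. $\mathcal J(\Phi^N(\xi)) = \mathcal J(\Phi^{N+k}(\xi))$ — then it has in fact reached its limit $\mathcal J(\Lb_{\Tk})$. The natural strategy is:

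1. First observe the monotonicity $\mathcal J(\Phi^n(\xi)) \subseteq \mathcal J(\Phi^{n+1}(\xi))$. This should follow from the fact that $\xi \in \Phi(\xi)$, so $\Phi^n(\xi) \subseteq \Phi^{n+1}(\xi)$, and since $\mathcal J(\Gb)$ is defined by an inclusion condition $\Pb \subseteq \Gb$, it is monotone in $\Gb$. (Here I would be careful about colours: $\xi$ is identified with a coloured point, and $\Phi^n(\xi)$ is a multi-colour cluster; "close pair" equivalence classes $[\Pb]$ with $\Pb = \{y,z\} \subset \Lb_{\Tk}$ carry colour data.)

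2. Conclude from step 1 that the stabilization hypothesis propagates: if $\mathcal J(\Phi^N(\xi)) = \mathcal J(\Phi^{N+k}(\xi))$, then by iterating, $\mathcal J(\Phi^N(\xi)) = \mathcal J(\Phi^{N+jk}(\xi))$ for all $j \ge 0$, hence $\mathcal J(\Phi^N(\xi)) = \bigcup_{n} \mathcal J(\Phi^n(\xi))$.

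3. Finally identify $\bigcup_n \mathcal J(\Phi^n(\xi))$ with $\mathcal J(\Lb_{\Tk})$. For "$\subseteq$" this is immediate since each $\Phi^n(\xi) \subset \Lb_{\Tk}$ (as $\Phi^n(\Lb_{\Tk}) = \Lb_{\Tk}$ and $\xi \in \Lb_{\Tk}$). For "$\supseteq$": given a close pair $\Pb = \{y,z\} \subset \Lb_{\Tk}$ with $|y-z| < R + \Vert Q^{-k}\Vert \alpha_{max}$, one uses that $\Lb_{\Tk} = \lim_{n\to\infty}\Phi^n(\Pb_0)$ for a generating cluster $\Pb_0$ (or equivalently that every $\Lb_{\Tk}$-cluster is legal, so $\Pb$ lies in some $a + \Phi^n(x)$), together with repetitivity, to translate $\Pb$ into some $\Phi^n(\xi)$ — hence $[\Pb] \in \mathcal J(\Phi^n(\xi))$. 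This is the place where the bound in the definition of $\Theta$ (and in particular its translation-invariance, so that $[\Pb]$ only depends on the translation class) matters.

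**Main obstacle.** The delicate step is step 3, the surjectivity onto $\mathcal J(\Lb_{\Tk})$: one must argue that every bounded-distance pair of coloured points occurring anywhere in $\Lb_{\Tk}$ already occurs, up to translation, inside some finite patch $\Phi^n(\xi)$. This needs repetitivity of the self-affine tiling together with the fact that the patches $\Phi^n(\xi)$ exhaust arbitrarily large balls around $\xi$ (because $Q$ is expansive and $\xi \in \Phi(\xi)$), so that any legal cluster — in particular any close pair — is eventually captured. I expect everything else (monotonicity, propagation of stabilization) to be routine bookkeeping; the finiteness of $\Theta$, guaranteed by the Meyer property via Lemma~\ref{number-of-overlaps}-type arguments, is what makes the stabilization meaningful but is not needed for the statement itself.
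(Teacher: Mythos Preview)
Your step 2 is the crux of the lemma, and it does \emph{not} follow from monotonicity alone. From $\mathcal J(\Phi^N(\xi)) \subseteq \mathcal J(\Phi^{N+1}(\xi)) \subseteq \cdots$ together with $\mathcal J(\Phi^N(\xi)) = \mathcal J(\Phi^{N+k}(\xi))$ you only get $\mathcal J(\Phi^N(\xi)) = \mathcal J(\Phi^{N+1}(\xi)) = \cdots = \mathcal J(\Phi^{N+k}(\xi))$; there is no abstract reason why the sequence cannot start growing again at step $N+k+1$. ``Iterating'' would require that $\mathcal J(\Phi^k(\Gb))$ is determined by $\mathcal J(\Gb)$, but that is precisely what needs to be proved.

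The missing mechanism is the following: given a close pair $\Pb = \{y',z'\} \subset \Phi^{N+k+1}(\xi)$ with $[\Pb]\in\Theta$, one must pull it back to a parent pair $\Pb' = \{y,z\} \subset \Phi^{N+1}(\xi)$ with $y'\in\Phi^k(y)$, $z'\in\Phi^k(z)$, and then show that $\Pb'$ is \emph{again} in $\Theta$. This is a genuine distance estimate: if $|y-z| > R + \|Q^{-k}\|\alpha_{\max}$ then $|y'-z'| \ge \|Q^{-k}\|^{-1}|y-z| - e^{(k)} > R + \alpha_{\max}$, contradicting $[\Pb]\in\Theta$. This is exactly why the threshold in the definition of $\Theta$ is $R + \|Q^{-k}\|\alpha_{\max}$ rather than, say, $R$. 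Once this is in place, $[\Pb']\in\mathcal J(\Phi^{N+1}(\xi)) = \mathcal J(\Phi^N(\xi))$, so a translate $\Pb''$ lives in $\Phi^N(\xi)$, and $\Phi^k(\Pb'')$ produces a translate of $\Pb$ inside $\Phi^{N+k}(\xi)$; hence $\mathcal J(\Phi^{N+k+1}(\xi)) = \mathcal J(\Phi^{N+k}(\xi))$ and induction runs. Your step 3 via repetitivity is fine and matches the paper, but the work you labelled ``routine bookkeeping'' is in fact the content of the lemma.
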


\begin{proof} Let $\Pb$ be a cluster in $\Phi^{N+k+1}(\xi)$ such that
$[\Pb] \in \Theta$. There must be a cluster $\Pb' = \{y, z\}
\subset \Phi^{N+1}(\xi)$ satisfying $\Pb \subset \Phi^k(\Pb')$. We
claim that $[\Pb'] \in \Theta$. We only need to show that $|y - z|
\le R + \Vert Q^{-k}\Vert \alpha_{max}$. Suppose that $|y - z|> R + \Vert Q^{-k}\Vert \alpha_{max}$. Then
\[|Q^k y - Q^k z| \ge  \frac{1}{\Vert Q^{-k}\Vert} |y -z| > \frac{1}{\Vert Q^{-k}\Vert} (R + \Vert Q^{-k}\Vert \alpha_{max})
= \frac{R}{\Vert Q^{-k}\Vert} + \alpha_{max}.
\] For any $y' \in \Phi^k(y)$ and
$z' \in \Phi^k(z)$, $y' = Q^k y +d_1$ and
 $z'=Q^k z + d_2$ for some $d_1, d_2 \in \cup_{i,j \le m} {(\mathcal{D}^k)}_{ij}$. Thus
\begin{eqnarray*}
|y' -z'| &=& |Q^k y - Q^k z + d_1 - d_2| \ge |Q^k y - Q^k z| -
e^{(k)} \\
& > & \frac{R}{\Vert Q^{-k}\Vert} + \alpha_{max} -e^{(k)} = R +
\alpha_{max} \ \ \ \mbox{from (\ref{formula-diam})} \\
& > & R + \Vert Q^{-k}\Vert \alpha_{max}.
\end{eqnarray*} It contradicts to
the choice of $\Pb$. Hence $[\Pb'] \in \Theta$. From the
assumption, note that
\[\Jk(\Phi^N(\xi)) = \Jk(\Phi^{N+1}(\xi)) =
\cdots = \Jk(\Phi^{N+k}(\xi))\,.\] So there exists a cluster
$\Pb''$ in $\Phi^{N}(\xi)$ which is equivalent to $\Pb'$. Then
$\Phi^k(\Pb'')$ contains a cluster which is equivalent to the
cluster $\Pb$. Thus
\[  \Jk(\Phi^{N+k}(\xi)) = \Jk(\Phi^{N+k+1}(\xi)) \,.\]
Hence
\[ \mathcal{J}(\Phi^N(\xi)) = \mathcal{J}(\lim_{n \to \infty} \Phi^n(\xi))\,.\]
By the repetitivity of $\Lb_{\Tk}$, all the clusters in
$\Lb_{\Tk}$ whose equivalent classes are in $\Theta$ should occur
in $\mathcal{J}(\lim_{n \to \infty} \Phi^n(\xi))$. Therefore
$\mathcal{J}(\Phi^N(\xi)) = \mathcal{J}(\Lb_{\Tk})$.
\end{proof}

\begin{lem} \label{initial-potential-overlap}
If $\Jk(\Phi^N(\xi)) = \Jk(\Lb_{\Tk})$, then for each $\alpha \in
\mathcal{B}$,
\[\mathcal{G}_{\alpha, 0} := \{[(y, \alpha, z)] : y, z \in \Phi^{N+k}(\xi) \
\mbox{and $|y - \alpha -z| < R$} \}\] contains all the different equivalence classes
of potential overlaps which can occur from the translation of
$\alpha$.
\end{lem}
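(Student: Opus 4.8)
The plan is to show that every potential overlap $(y,\alpha,z)$ in $\Tk$ is equivalent to one in which $y,z \in \Phi^{N+k}(\xi)$, using the hypothesis $\Jk(\Phi^N(\xi)) = \Jk(\Lb_{\Tk})$ together with repetitivity and the self-similar structure. First I would fix an arbitrary potential overlap $(y,\alpha,z)$ with $y \in \Lam_i$, $z \in \Lam_j$, and $|y-\alpha-z| \le R$. Since $\alpha \in \mathcal{B} \subset \Xi(\Tk)$, recall from (\ref{translates-basis}) that $\xi + \alpha \in \Phi^{\ell}(\xi)$ for some $\ell$; hence $\alpha = (\xi+\alpha) - \xi$ is realized as a difference of two points of $\Lb_{\Tk}$. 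The idea is that the translation by $\alpha$ can be produced by a local configuration, so that detecting a potential overlap at translation $\alpha$ reduces to detecting a suitable pair of nearby points $\{y', z'\}$ inside a single iterate $\Phi^n(\xi)$.

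The key step is the following reduction: a potential overlap $(y,\alpha,z)$ occurs precisely when there is a point $w$ with $w \in y + A_i - \alpha$ and $w \in z + A_j$ lying close enough, and by the diameter bound from the proof of Lemma (the one establishing (\ref{formula-diam})) the relevant points $y$, $z$, $y+\alpha$ all lie within a ball of radius essentially $R + \Vert Q^{-k}\Vert\,\alpha_{max}$ of one another. Thus the data of the potential overlap is captured by a cluster $\Pb = \{y,\, z\} \subset \Lb_{\Tk}$ (or a triple involving a translate of $\xi$) with $|y-z| < R + \Vert Q^{-k}\Vert\,\alpha_{max}$, i.e. $[\Pb] \in \Theta$. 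By the hypothesis $\Jk(\Phi^N(\xi)) = \Jk(\Lb_{\Tk})$, this equivalence class already appears among clusters inside $\Phi^N(\xi)$. Applying $\Phi^k$ and using (\ref{supp-of-iterated-tile}) to track the supports, the inflated cluster sits inside $\Phi^{N+k}(\xi)$ and still realizes the same translation $\alpha$ up to equivalence, with the distance condition $|y'-\alpha-z'| < R$ preserved because inflation by $Q^k$ (which expands by at least $1/\Vert Q^{-k}\Vert$) only separates points that were already too far apart — this is exactly the contrapositive estimate used in the proof of Lemma \ref{init0}. Therefore every equivalence class of potential overlap from the translation of $\alpha$ is represented in $\mathcal{G}_{\alpha,0}$.

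The main obstacle I anticipate is bookkeeping the relation between the abstract translation vector $\alpha$ and the concrete pair of points witnessing the overlap inside a finite iterate: one must make sure that when passing from the cluster $\{y,z\}$ to its class in $\Theta$ and back, the specific translation $\alpha$ (not merely some nearby vector) is preserved, which requires that $\alpha$ itself be rigidly encoded in the cluster — this is why $\xi$ and its translate $\xi+\alpha$ are built into the construction of $\mathcal{B}$ via (\ref{translates-basis}). A secondary technical point is the choice of the inflation exponent $k$: one needs $\Vert Q^{-k}\Vert < 1$ so that the expansion estimate $|Q^k y - Q^k z| \ge \Vert Q^{-k}\Vert^{-1}|y-z|$ together with the bound $R + \alpha_{max} - e^{(k)} = R + \Vert Q^{-k}\Vert\,\alpha_{max} \cdot (\text{something} \ge 1)$ from (\ref{formula-diam}) gives the strict inequality needed to rule out spurious potential overlaps appearing only after inflation. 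Once these two points are handled, the conclusion follows by combining Lemma \ref{init0}, repetitivity of $\Lb_{\Tk}$, and the diameter bound, with no further computation required.
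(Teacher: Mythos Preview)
Your proposal has a genuine gap in the ``key step.'' You claim that the pair $\Pb=\{y,z\}$ itself satisfies $|y-z|<R+\Vert Q^{-k}\Vert\,\alpha_{max}$ and hence $[\Pb]\in\Theta$. But from $|y-\alpha-z|\le R$ one only gets $|y-z|\le R+|\alpha|\le R+\alpha_{max}$, and since $\Vert Q^{-k}\Vert<1$ this is \emph{strictly larger} than the $\Theta$-threshold $R+\Vert Q^{-k}\Vert\,\alpha_{max}$. So $\{y,z\}$ need not lie in $\Theta$, and the hypothesis $\Jk(\Phi^N(\xi))=\Jk(\Lb_{\Tk})$ cannot be applied to it. Your subsequent step of ``applying $\Phi^k$'' compounds the problem: inflating a pair in $\Phi^N(\xi)$ by $\Phi^k$ multiplies the difference vector by $Q^k$ (plus digit shifts), so it does not reproduce the original class $[(y,\alpha,z)]$.

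The paper runs the argument in the opposite direction. Given $(y,\alpha,z)$, it first passes to the \emph{parent} pair $\{u,v\}$ with $y\in\Phi^k(u)$, $z\in\Phi^k(v)$, writing $y=Q^ku+d_1$, $z=Q^kv+d_2$. The contrapositive expansion estimate (the same one you cite from Lemma~\ref{init0}) then shows $|u-v|\le R+\Vert Q^{-k}\Vert\,\alpha_{max}$, so it is $\{u,v\}$, not $\{y,z\}$, that lies in $\Theta$. The hypothesis now furnishes an equivalent pair $\{u',v'\}\subset\Phi^N(\xi)$; applying $\Phi^k$ with the \emph{same} digits $d_1,d_2$ produces $y',z'\in\Phi^{N+k}(\xi)$ with $y'-z'=y-z$, hence $[(y',\alpha,z')]=[(y,\alpha,z)]$. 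This is why the exponent in $\mathcal{G}_{\alpha,0}$ is $N+k$ rather than $N$. Your discussion of $\alpha$ being ``rigidly encoded'' via (\ref{translates-basis}) is a red herring: that condition is not used here, since the equivalence class of $(y,\alpha,z)$ is determined by the types and the single vector $y-\alpha-z$, and preserving $y-z$ (which the parent-then-inflate step does) is all that is needed.
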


\begin{proof}
If $\{y, z\} \subset \Lb_{\Tk}$ such that $|y - \alpha - z| < R$
for $\alpha \in \mathcal{B}$, there exist $u, v \in \Lb_{\Tk}$
such that $y \in \Phi^k(u)$ and $z \in \Phi^k(v)$. Let $y = Q^k u
+ d_1$ and $z = Q^k v + d_2$, where $d_1, d_2 \in \cup_{i,j \le m}
(\mathcal{D}^k)_{ij}$. Let $\Pb = \{u, v\}$. We claim that $[\Pb]
\in \mathcal{J}(\Phi^{N}(\xi))$. Suppose that $|u - v| > R + \Vert
Q^{-k}\Vert \alpha_{max}$. Then
\begin{eqnarray*}
|y -z| &=& |Q^k u - Q^k v + d_1
-d_2| \ge |Q^k(u-v)| - e^{(k)} \\
& \ge & \frac{1}{\Vert Q^{-k}\Vert}|u-v| - e^{(k)} >
\frac{ R}{\Vert Q^{-k}\Vert} + \alpha_{max} - e^{(k)} = R + \alpha_{max}
\end{eqnarray*}
It contradicts to the choice of $\{y, z\} \subset \Lb_{\Tk}$.
Since $\Jk(\Phi^N(\xi)) = \Jk(\Lb_{\Tk})$, $[(y, \alpha, z)] \in
\mathcal{G}_{\alpha, 0}$.
\end{proof}

\begin{lem}
If $\mathcal{J}(\Phi^N(\xi)) = \mathcal{J}(\Lb_{\Tk})$ for some $N
\in \Z_+$, then for each $n \in \Z_{\ge 0}$ and $\alpha \in
\mathcal{B}$,
\[ \{[(y, Q^n \alpha, z)] : y, z \in \Phi^{N+k+n}(\xi) \
\mbox{and $(y, Q^n \alpha, z)$ is a potential overlap} \} \]
contains all the different equivalence classes of potential
overlaps which occur from the translation of $Q^n \alpha$.
\end{lem}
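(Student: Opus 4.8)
The plan is to reduce the statement for translations by $Q^n\alpha$ to the already-established case $n=0$ (Lemma~\ref{initial-potential-overlap}) by inflating potential overlaps and controlling distances under $\Phi^n$. The key observation is that every potential overlap $(y, Q^n\alpha, z)$ with $y,z \in \Lb_{\Tk}$ arises, after enough inflation, from a cluster $\{u,v\}\subset\Lb_{\Tk}$ with $u,v$ not too far apart, and such a cluster is captured by $\Jk(\Lb_{\Tk})=\Jk(\Phi^N(\xi))$.

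First I would fix $n\in\Z_{\ge 0}$ and $\alpha\in\mathcal{B}$, and let $(y,Q^n\alpha,z)$ be any potential overlap with $y,z\in\Lb_{\Tk}$, so $|y-Q^n\alpha-z|\le R$. Since $\Lb_{\Tk}=\Phi^n(\Lb_{\Tk})$, there exist $u,v\in\Lb_{\Tk}$ with $y\in\Phi^n(u)$, $z\in\Phi^n(v)$; write $y = Q^n u + d_1$, $z = Q^n v + d_2$ with $d_1,d_2\in\bigcup_{i,j\le m}(\mathcal{D}^n)_{ij}$. I claim $(u,\alpha,v)$ is a potential overlap, i.e. $|u-\alpha-v|\le R$. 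If not, then $|u-\alpha-v|>R$, and using $\alpha\in\Xi(\Tk)$ with $\xi+\alpha\in\Phi^\ell(\xi)$ — hence $Q^n\alpha$ is realized as a difference inside $\Phi^{n+\ell}(\xi)$ — one checks that $Q^n(u-\alpha-v) = (y-d_1) - Q^n\alpha - (z-d_2)$, so
\[
|y - Q^n\alpha - z| \ge |Q^n(u-\alpha-v)| - |d_1 - d_2| \ge \frac{1}{\Vert Q^{-n}\Vert}|u-\alpha-v| - e^{(n)} > R
\]
once $n$ is large; for small $n$ one instead argues directly that if $|u-\alpha-v|$ exceeds $R$ then, since the inflation $\Phi^n$ expands this vector by a factor at least $1/\Vert Q^{-n}\Vert$ while the digit displacements contribute at most $e^{(n)}$, and $R$ was chosen in (\ref{formula-diam}) precisely so that $R/\Vert Q^{-k}\Vert - e^{(k)} = R$ (the same telescoping as in Lemmas~\ref{init0} and~\ref{initial-potential-overlap}), the contradiction $|y-Q^n\alpha-z|>R$ follows. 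So $\{u,v\}$ together with the translation $\alpha$ forms a potential overlap occurring from $\alpha$.

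By Lemma~\ref{initial-potential-overlap}, $[(u,\alpha,v)]\in\mathcal{G}_{\alpha,0}$, so there is a representative $\{u',v'\}\subset\Phi^{N+k}(\xi)$ with $u'-\alpha-v'$ equal (up to the translation defining equivalence) to $u-\alpha-v$ and with $(u',\alpha,v')$ a potential overlap. Then $\Phi^n(u')\subset\Phi^{N+k+n}(\xi)$ and $\Phi^n(v')\subset\Phi^{N+k+n}(\xi)$, and since the inflated cluster $\Phi^n(\{u',v'\})$ is translationally equivalent to $\Phi^n(\{u,v\})$, it contains points $y',z'$ with $(y',Q^n\alpha,z')$ equivalent to $(y,Q^n\alpha,z)$; by the definition of $\Phi^n\mathcal{O}$ as consisting only of triples that are potential overlaps, $(y',Q^n\alpha,z')$ is itself a potential overlap lying in $\Phi^{N+k+n}(\xi)$. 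Hence $[(y,Q^n\alpha,z)]$ lies in the displayed set, which proves the lemma.

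The main obstacle I anticipate is the bookkeeping in the claim $|u-\alpha-v|\le R$: one must make sure the constant $R$ from (\ref{formula-diam}) is robust enough to close the induction for \emph{every} $n\ge 0$ simultaneously, not just for $n$ large enough that $\Vert Q^{-n}\Vert<1$. The cleanest route is to first prove the one-step statement (from $Q^{n+1}\alpha$ down to $Q^n\alpha$ via a single application of $\Phi$, using the factor $\Vert Q^{-k}\Vert$ exactly as in Lemma~\ref{init0}) and then iterate, together with the base case $n=0$ supplied by Lemma~\ref{initial-potential-overlap}; this avoids having to handle $Q^{-n}$ for $n$ not a multiple of $k$. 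A secondary point to be careful about is that equivalence of overlaps is defined by a common translation $g$ acting as $u-y\mapsto g+u'-y'$, $v\mapsto g+v'$, so one must check that inflation by $\Phi^n$ (which sends translations $x\mapsto Q^n x$) genuinely carries equivalence classes to equivalence classes and potential overlaps to potential overlaps — but this is exactly the content of the definition of $\Phi^n\mathcal{O}$ already in place.
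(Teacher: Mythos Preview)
Your proposal ultimately lands on the paper's own argument: the paper proves the lemma by straight induction on $n$, with base case $n=0$ given by Lemma~\ref{initial-potential-overlap} and the inductive step pulling a potential overlap $(y,Q^{i+1}\alpha,z)$ back one $\Phi$-step to $(u,Q^i\alpha,v)$, invoking the hypothesis inside $\Phi^{N+k+i}(\xi)$, and re-inflating once into $\Phi^{N+k+i+1}(\xi)$. Your initial direct route---pulling back by $\Phi^n$ in one shot to reach a potential overlap for $\alpha$---is genuinely different, and you correctly flag its obstacle: the contradiction estimate $\frac{1}{\Vert Q^{-n}\Vert}R - e^{(n)} \ge R$ is only secured by the very definition (\ref{formula-diam}) of $R$ when $n$ is (a multiple of) $k$, so the claim $|u-\alpha-v|\le R$ is not established for general $n$ that way.

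One point in your write-up needs correcting. In your ``cleanest route'' you propose to handle the inductive step by ``a single application of $\Phi$, using the factor $\Vert Q^{-k}\Vert$ exactly as in Lemma~\ref{init0}''. These do not match: one step of $\Phi$ involves $Q$, not $Q^k$, and the identity that closes exactly, $\Vert Q^{-k}\Vert(R+e^{(k)})=R$, controls a $\Phi^k$-pullback, as in Lemmas~\ref{init0} and~\ref{initial-potential-overlap}. The paper's inductive step, as written, simply \emph{asserts} that the one-step pullback $(u,Q^i\alpha,v)$ is again a potential overlap without carrying out the estimate, so neither argument is explicit at this point; but your sentence should not conflate the one-step and $k$-step contractions. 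Apart from this, your plan and the paper's proof coincide.
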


\proof We argue this by induction. Note that when $n=0$, the claim
is true. Suppose that it is true for $n=i$, $i \in \Z_+$. Consider
$n=i+1$. Let $\mathcal{O}$ be a potential overlap which occurs
from the translation of $Q^{i+1} \alpha$. Then there exist $y, z
\in \Phi^{N+i+s}(\xi) \subset \Lb$ for some $s \in \N$ such that
\[\mathcal{O} = [(y, Q^{i+1} \alpha, z)].\] Then there exists a
potential overlap $(u, \alpha, v)$ with $u, v \in
\Phi^{N+i+s-1}(\xi)$ such that
\[(y, Q^{i+1} \alpha, z) \in \Phi(u, Q^i \alpha, v)\,.\]
But we know that there exists a potential overlap $(u', Q^i
\alpha, v')$ with $u', v' \in \Phi^{N+k+i} (\xi)$ which is
equivalent to a potential overlap $(u, Q^i \alpha, v)$ by the
assumption. Thus there exists an equivalent potential overlap
$(y', Q^{i+1} \alpha, z')$ to $(y, Q^{i+1} \alpha, z)$ which is
contained in $\Phi(u', Q^i \alpha, v')$. Note that
\[ y', z' \in \Phi^{N+k+i+1}(\xi)\,.\]
Thus
\[ \{[(y, Q^{i+1} \alpha, z)] : y, z \in \Phi^{N+k+i+1}(\xi)\} \]
contains all the different types of equivalent potential overlaps
which occur from the translation $Q^{i+1} \alpha$. Thus the claim
is proved. \qed

\medskip

For any $\alpha \in \mathcal{B}$ and any $M \in \Z_{\ge 0}$,
define
\[\Gk_{\alpha, M} := \bigcup_{0 \le n \le M} \{[(y, Q^n \alpha, z)] :
y, z \in \Phi^{N+k+n}(\xi)  \
\mbox{and $(y, Q^n \alpha, z)$ is a potential overlap} \}\,\]
\[\Gk_{\alpha} := \bigcup_{M \in \Z_{\ge 0}} \Gk_{\alpha, M} \ \ \mbox{and} \ \
\Gk = \bigcup_{\alpha \in \mathcal{B}} \Gk_{\alpha}.\]

\begin{lem} \label{finding-all-overlaps}
Let $\alpha \in \mathcal{B}$. If $\Gk_{\alpha, M} = \Gk_{\alpha,
M+1}$ for some $M (= M_{\alpha}) \in \Z_{\ge 0}$, then
\[  \Gk_{\alpha, M} = \Gk_{\alpha}.\]
\end{lem}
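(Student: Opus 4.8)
The plan is to imitate the proof strategy already established in Lemma~\ref{init0}: show that the stabilization of the family of potential overlap classes after one extra step of inflation propagates to all subsequent steps, and that this exhausts all potential overlaps arising from translations of the form $Q^n\alpha$. The key structural fact we will use is the previous lemma, which says that for each $n \in \Z_{\ge 0}$ the set $\{[(y, Q^n\alpha, z)] : y, z \in \Phi^{N+k+n}(\xi),\ (y,Q^n\alpha,z) \text{ is a potential overlap}\}$ already contains \emph{all} equivalence classes of potential overlaps occurring from the translation $Q^n\alpha$. Thus $\Gk_{\alpha,M}$ literally equals the set of all equivalence classes of potential overlaps occurring from translations $Q^n\alpha$ with $0 \le n \le M$, and $\Gk_\alpha$ is the set of all such classes over all $n \in \Z_{\ge 0}$.

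First I would reduce the claim to showing that $\Gk_{\alpha,M}=\Gk_{\alpha,M+1}$ implies $\Gk_{\alpha,M}=\Gk_{\alpha,M+1}=\Gk_{\alpha,M+2}$, since then by induction $\Gk_{\alpha,M}=\Gk_{\alpha,M'}$ for all $M'\ge M$, and since $\Gk_{\alpha,M}\subset\Gk_{\alpha,M+1}\subset\cdots$ is a monotone increasing chain whose union is $\Gk_\alpha$, this forces $\Gk_{\alpha,M}=\Gk_\alpha$. So the heart of the matter is the single inductive step: assuming $\Gk_{\alpha,M+1}=\Gk_{\alpha,M}$, take an arbitrary class $[(y,Q^{M+2}\alpha,z)]\in\Gk_{\alpha,M+2}$, represented by a potential overlap with $y,z\in\Phi^{N+k+M+2}(\xi)$. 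As in the proof of the previous lemma, there is a potential overlap $(u,Q^{M+1}\alpha,v)$ with $u,v\in\Phi^{N+k+M+1}(\xi)$ and $(y,Q^{M+2}\alpha,z)\in\Phi(u,Q^{M+1}\alpha,v)$; hence $[(u,Q^{M+1}\alpha,v)]\in\Gk_{\alpha,M+1}$.

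Now the stabilization hypothesis enters: since $\Gk_{\alpha,M+1}=\Gk_{\alpha,M}$, the class $[(u,Q^{M+1}\alpha,v)]$ already appears at level $\le M$, i.e., it equals $[(u',Q^{n}\alpha,v')]$ for some $n\le M$ with $u',v'\in\Phi^{N+k+n}(\xi)$. But here I need to be careful: the translations are $Q^{M+1}\alpha$ versus $Q^n\alpha$ with $n\le M$, and equivalence of overlaps (as defined in the excerpt) does not change the translation vector $y$. So in fact the correct reading is that $[(u,Q^{M+1}\alpha,v)]$, being a potential overlap occurring from $Q^{M+1}\alpha$, lies in $\Gk_{\alpha,M+1}$; by the union structure it actually must already be realized with representatives in $\Phi^{N+k+(M+1)}(\xi)$, which it is. The genuine content of $\Gk_{\alpha,M}=\Gk_{\alpha,M+1}$ must therefore be that the \emph{set of equivalence classes of potential overlaps occurring from translations $Q^n\alpha$, $0\le n\le M$} already contains every class that occurs from $Q^{M+1}\alpha$; equivalently, inflating any potential overlap from a translation of level $\le M$ produces only classes already present. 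Granting this, inflating once more, $\Phi(u,Q^{M+1}\alpha,v)$ contains a potential overlap $(y',Q^{M+2}\alpha,z')$ equivalent to $(y,Q^{M+2}\alpha,z)$ whose image class, by the stabilization applied to the representative $(u',Q^n\alpha,v')$ of $[(u,Q^{M+1}\alpha,v)]$ at level $n\le M$, lies in $\Gk_{\alpha,n+1}\subset\Gk_{\alpha,M+1}=\Gk_{\alpha,M}$. Hence $[(y,Q^{M+2}\alpha,z)]\in\Gk_{\alpha,M}$, completing the step.

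The main obstacle I anticipate is bookkeeping the level shifts correctly: one must track simultaneously (a) the inflation index $n$ labelling the translation $Q^n\alpha$, and (b) the exponent $N+k+n$ of $\Phi$ needed to guarantee that \emph{all} equivalence classes at that level are captured (this is exactly what the previous lemma buys us, via $\Jk(\Phi^N(\xi))=\Jk(\Lb_\Tk)$). The subtlety is that the equivalence relation on overlaps fixes the middle translation vector, so ``$\Gk_{\alpha,M}=\Gk_{\alpha,M+1}$'' cannot literally mean a class with translation $Q^{M+1}\alpha$ equals one with translation $Q^M\alpha$; rather it must be interpreted through the inflation map $\Phi$, under which $Q^n\alpha$ maps to $Q^{n+1}\alpha$, so stabilization of the class-sets is preserved by $\Phi$ and hence propagates upward. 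Once this interpretation is pinned down, the argument is a routine induction mirroring Lemma~\ref{init0}, and no new estimate is required beyond those already in (\ref{formula-diam}) and (\ref{find-K-corresponding-to-h}).
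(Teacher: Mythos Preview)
Your argument is correct and follows essentially the same route as the paper's proof: reduce to showing $\Gk_{\alpha,M+1}=\Gk_{\alpha,M+2}$, pull a class at level $M+2$ back through one inflation step to a parent at level $M+1$, use the hypothesis to replace the parent by an equivalent representative at some level $n\le M$, and push forward again. One clarification will dissolve the obstacle you spend the last paragraph worrying about: the equivalence relation on overlaps does \emph{not} fix the middle translation vector. From the definition, $(u,y,v)\sim(u',y',v')$ requires only $u-y=g+u'-y'$ and $v=g+v'$ with matching tile types, so the invariant is the triple $(i,\,u-y-v,\,j)$, and $y,y'$ may differ freely. Hence $\Gk_{\alpha,M}=\Gk_{\alpha,M+1}$ already literally says that every class occurring from translation $Q^{M+1}\alpha$ coincides with some class occurring from $Q^n\alpha$, $n\le M$, and no reinterpretation ``through the inflation map'' is needed; the paper invokes exactly this, writing $[\Phi(y',Q^{n-1}\alpha,z')]=[\Phi(y'',Q^{n'}\alpha,z'')]\subset\Gk_{\alpha,M+1}$ directly.
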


\proof Let $[(y, Q^{n} \alpha, z)] \in \Gk_{\alpha, M+2}$, where
$0 \le n \le M+2$. Then there exists a potential overlap $(y',
Q^{n-1} \alpha, z')$, with $y', z' \in \Phi^{N+k+M+1}(\xi)$, such
that
\[(y, Q^{n} \alpha, z) \in \Phi(y', Q^{n-1} \alpha, z').\]
Since $\Gk_{\alpha, M} = \Gk_{\alpha, M+1}$, $(y', Q^{n-1} \alpha,
z')$ is equivalent to $(y'', Q^{n'} \alpha, z'')$ for some $y'',
z'' \in \Phi^{N+k+{n'}}(\xi)$ where $0 \le n' \le M$. So
\begin{eqnarray*}
[\Phi (y', Q^{n-1} \alpha, z')] = [\Phi (y'', Q^{n'} \alpha, z'')] \,.
\end{eqnarray*}
Thus $[(y, Q^{n} \alpha, z)] \in \Gk_{\alpha, M+1}$. \qed

\medskip
Let $\mathcal{H}$ be the set of all equivalent classes of overlaps
in $\Tk$ and $\mathcal{G}_{coin}$ be the set of all equivalence
classes of overlaps in $\Tk$ which lead to coincidence after some
iterations. Note that \[\mathcal{G}_{coin} \subset \mathcal{H}
\subset \mathcal{G}\] and for each $\alpha \in \mathcal{B}$, $
M_{\alpha} \le m^2 I$, since there are at most $m^2 I$ equivalence
classes of potential overlaps in $\Tk$ by the
Lemma\,\ref{number-of-overlaps}.

\medskip
\begin{lem}
If an overlap in $\Tk$ has a coincidence after some iterations, it
should happen before $\sharp \mathcal{G}$ number of iterations,
i.e.
\begin{eqnarray*}
 \mathcal{G}_{coin} & = & \{[(u, Q^n \alpha, v)] \in
\mathcal{G} \, : \, \Phi_{\ell i}^t (u- Q^n \alpha) \cap \Phi_{\ell
j}^t (v) \neq \emptyset \\
&& \ \ \ \mbox{for some $1 \le \ell \le m$ and $0 \le t < \sharp\mathcal{G}$,
where $u + T_i, v + T_j \in \Tk$}\}.
\end{eqnarray*}
\end{lem}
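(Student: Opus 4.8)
The statement to prove is that if an overlap admits a coincidence after some number of iterations, then the coincidence already occurs within $\sharp\mathcal{G}$ iterations. The natural strategy is a pigeonhole argument on the sequence of equivalence classes of inflated overlaps. Given an overlap $\mathcal{O}_0 = (u, Q^n\alpha, v) \in \mathcal{G}$, I would track a single ``descendant chain'' of overlaps $\mathcal{O}_0, \mathcal{O}_1, \mathcal{O}_2, \dots$, where $\mathcal{O}_{t+1}$ is chosen among the overlaps in $\Phi\mathcal{O}_t$ that will eventually lead to the coincidence. (More carefully: since $\Phi^{\ell}\mathcal{O}_0$ contains a coincidence, fix one coincidence $c \in \Phi^{\ell}\mathcal{O}_0$ and, using the factorization of $\Phi^{\ell}$ through $\Phi$, choose at each step $\mathcal{O}_t$ so that $c \in \Phi^{\ell-t}\mathcal{O}_t$; this is exactly the condition $\Phi_{\ell i}^t(u - Q^n\alpha) \cap \Phi_{\ell j}^t(v) \neq \emptyset$ read off the MFS.) Each $\mathcal{O}_t$ has an equivalence class $[\mathcal{O}_t]$, which by Lemma\,\ref{number-of-overlaps} (and the inclusions $\mathcal{G}_{coin} \subset \mathcal{H} \subset \mathcal{G}$) lies in the finite set $\mathcal{G}$ of size at most $m^2 I$.

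The key step is the observation that the property ``leads to a coincidence'' descends to equivalence classes and is compatible with inflation. Concretely: if $[\mathcal{O}]=[\mathcal{O}']$ and $\mathcal{O}$ leads to a coincidence in $s$ more steps, then so does $\mathcal{O}'$, because translation equivalence commutes with the substitution $\Phi$ (from $(\ref{tile-equivalence})$, $\Omega(x+T_j) = Qx + \Omega(T_j)$, so $[\Phi\mathcal{O}] = [\Phi\mathcal{O}']$ whenever $[\mathcal{O}]=[\mathcal{O}']$, and a coincidence in one is a coincidence in the other). So if two indices $t_1 < t_2 \le \ell$ in the descendant chain satisfy $[\mathcal{O}_{t_1}] = [\mathcal{O}_{t_2}]$, I can \emph{splice}: replace the tail of the chain after $t_1$ by the tail after $t_2$, obtaining a shorter chain from $\mathcal{O}_0$ that still reaches a coincidence. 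Iterating this splicing removes all repetitions, leaving a chain of length at most $\sharp\mathcal{G}$ (one step per distinct equivalence class visited) that still ends in a coincidence. Hence the coincidence occurs within $\sharp\mathcal{G}$ iterations, which is the stated characterization of $\mathcal{G}_{coin}$.

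The one direction that needs a word is that the displayed set is exactly $\mathcal{G}_{coin}$, not merely contained in it: the reverse inclusion is trivial (an overlap with $\Phi_{\ell i}^t(u-Q^n\alpha) \cap \Phi_{\ell j}^t(v) \neq \emptyset$ for some $\ell$ and $t$ certainly has a coincidence in some $\Phi^t\mathcal{O}$, hence lies in $\mathcal{G}_{coin}$), so the content is the forward inclusion handled by the pigeonhole/splicing argument above. I would also note that ``$\Phi^t\mathcal{O}$ contains a coincidence'' unwinds, via the MFS notation $\Phi_{ij}^t = \{x \mapsto Q^t x + d : d \in (\mathcal{D}^t)_{ij}\}$ and the definition of the inflated overlap $\Phi^t\mathcal{O} = \{(u', Q^t y, v') : u' \in \Phi^t(u), v' \in \Phi^t(v),\ \text{overlap}\}$, precisely to the condition ``$\exists\, \ell$: $\Phi_{\ell i}^t(u - Q^t y) \cap \Phi_{\ell j}^t(v) \neq \emptyset$'' (recall $y = Q^n\alpha$, so $Q^t y = Q^{t+n}\alpha$), matching the formula in the statement.

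**Main obstacle.** The only subtlety I anticipate is bookkeeping the descendant chain correctly: one must make sure that at each inflation step the \emph{chosen} child $\mathcal{O}_{t+1}$ is genuinely a (potential, indeed real) overlap and that the coincidence at level $\ell$ really traces back through the MFS to a nonempty intersection $\Phi_{\ell i}^t(u-Q^t y)\cap \Phi_{\ell j}^t(v)$ at level $t$ — i.e., that the coincidence condition is preserved \emph{backwards} under choosing the right branch. This is where the explicit form $\Phi^k(\mathcal{O})$ and the composition formula for $(\mathcal{D}^k)_{ij}$ do the work, but it is purely formal once set up. The finiteness bound $\sharp\mathcal{G} \le m^2 I$ from Lemma\,\ref{number-of-overlaps} then makes the pigeonhole effective, giving the explicit iteration bound.
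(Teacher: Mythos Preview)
Your proposal is correct and follows essentially the same pigeonhole idea as the paper: since there are only $\sharp\mathcal{G}$ equivalence classes of potential overlaps, an overlap that eventually reaches a coincidence must do so within $\sharp\mathcal{G}$ iterations. The paper's own proof is extremely terse (three sentences, just invoking finiteness of $\mathcal{G}$ and noting that if no coincidence appears in the first $\sharp\mathcal{G}$ inflations then none can ever appear), whereas you make the argument explicit via a descendant chain and splicing; the content is the same.
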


\proof Note that there are at most $\sharp\mathcal{G}$ number of
equivalence classes of potential overlaps in $\Tk$. For any
overlap $\mathcal{O}$, if coincidence does not occur in
$\Phi^t(\mathcal{O})$ for some $0 \le t \le \sharp\mathcal{G}$,
coincidence will never occur in $\Phi^n(\mathcal{O})$ for any $n
\in \Z_{\ge 0}$. Since $Q$ is an expansive map, it is sufficient
to check for $0 \le t < \sharp\mathcal{G}$.

\section{The potential overlaps that are not real overlaps}
\label{The potential overlaps that are not real overlaps}

%


\noindent We aim to prove the following Theorem\,\ref{main} in
this section. The algorithm given by this theorem is quite simple
and easy to implement and applies to all self-affine tilings
whenever the expansion maps $Q$ and digit sets $\Dk_{ij}$, which
define the self-affine tilings, are given.

\medskip

In the sequel, we construct a {\em graph with multiplicities}
viewing potential overlaps in $\mathcal{G}$ as vertices and define
multiple edges by counting the vertices in the inflated potential
overlaps. Hereafter we deal with the representatives of
equivalence classes of potential overlaps in $\mathcal{G}$. Let
$(u, y, v)$ be a potential overlap, where $u+T_i, v+T_j \in \Tk$,
$T_i = (A_i,i)$ and $T_j=(A_j,j)$. Inflating the corresponding
tiles in the potential overlap $(u, y, v)$ and intersecting them,
we observe
\begin{eqnarray} \label{intersection-boundaryTouchingTiles}
\lefteqn{Q(u+A_i-y) \cap Q(v+A_j)} \nonumber \\
 & = & \bigcup_{k=1}^m (A_{k}+\Dk_{ki}+ Q u -Q y) \cap
\bigcup_{\ell=1}^m (A_{\ell}+\Dk_{\ell j}+Q v)\\
& = & \bigcup_{k=1}^m \bigcup_{\ell=1}^m \bigcup_{d_{ki} \in \Dk_{ki}} \bigcup_{d_{\ell j} \in \Dk_{\ell j}}
(((A_{k}+d_{ki}-d_{\ell j}+Q u-Q y-Q v) \cap A_{\ell}) + d_{\ell j} + Q v). \nonumber
\end{eqnarray}
The equivalence class $[(u,y,v)]$ can be viewed as an element
$(i,y,j)$ where $1 \le i,j \le m$ and $z = u-y-v$ with $|z| \le R$
where $R$ is as defined in (\ref{formula-diam}). We define the
multiple edge
\begin{equation}
\label{transition}
(i, z,j) \stackrel{e}{\rightarrow} (k, z', \ell)
\end{equation}
if $z'=d_{ki}-d_{\ell j}+Q x_i -Q y -Q x_j$ with $|z'|\le R$ for
$d_{ki}\in \mathcal{D}_{ki}$ and $d_{\ell j}\in
\mathcal{D}_{\ell j}$, where the multiplicities of the edge is
given by ${\#}\{(d_{ki},d_{\ell j}) \in \Dk_{ki} \times
\Dk_{\ell j}\ |\ z'= d_{ki}-d_{j \ell}+Q u - Q y - Q v \}$.
Keeping the multiplicity in the graph is essential to distinguish
real overlaps from potential overlaps that are not. Recall that
$(i,y,j)$ is a coincidence if $i=j$ and $y=0$. We consider
$\mathcal{G}_{coin}$ as the induced graph of $\mathcal{G}$ to the
vertices which have a path leading to a coincidence. Also we
define $\mathcal{G}_{res}$ by the induced graph generated by the
complement of such set from $\mathcal{G}$, i.e. $\mathcal{G}_{res}
= \mathcal{G}\, \backslash \, \mathcal{G}_{coin}$. For any graph
$G$, we denote by $\rho(G)$ the spectral radius of the graph $G$.

\begin{thm} \label{main} Let $\Tk$ be a self-affine tiling for which
$\Xi(\Tk)$ is a Meyer set. Then the following are equivalent;
\begin{itemize}
\item[(i)] $\Tk$ admits an overlap coincidence.
\item[(ii)]
    $\rho(\mathcal{G}_{coin})>\rho(\mathcal{G}_{res})$.
\end{itemize}
\end{thm}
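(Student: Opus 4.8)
The plan is to prove the two implications separately, using the spectral radius of the multiplicity graph $\mathcal{G}$ as a tool to detect whether a ``potential overlap'' is genuinely a real overlap. The key observation is that if $(u,y,v)$ is a real overlap, i.e.\ $(u+A_i-y)^{\circ}\cap(v+A_j)^{\circ}\neq\emptyset$, then intersecting the inflated tiles as in (\ref{intersection-boundaryTouchingTiles}) produces a union whose total $d$-dimensional volume is positive, and this volume is exactly tracked by the number of edges (counted with multiplicity) leaving the vertex $(i,z,j)$ in $\mathcal{G}$ that land on vertices $(k,z',\ell)$ which are themselves real overlaps. Thus the volume $\Vol(\supp(u,y,v))$ satisfies a recursion governed by the submatrix of the multiplicity-weighted adjacency matrix of $\mathcal{G}$ supported on the real overlaps; since the substitution multiplies volumes by $|\det Q|$ at each step, this submatrix has spectral radius exactly $|\det Q|$. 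Conversely, along vertices of $\mathcal{G}$ that are \emph{not} real overlaps, the corresponding sets meet only in boundaries and have volume zero, so the edges there ``overcount'' and the growth is strictly slower — this is where the variant of the Urba\'nski/Solomyak conjecture on the Hausdorff dimension of tile boundaries enters.

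For (i)$\Rightarrow$(ii): assume $\Tk$ admits an overlap coincidence. By Lemma\,\ref{equivalence-of-coincidence-for-alpha} and Theorem\,\ref{coincidence-check-on-finite-basis}, every overlap eventually inflates to contain a coincidence, so every real overlap lies in $\mathcal{G}_{coin}$; hence $\mathcal{H}\subset\mathcal{G}_{coin}$, and $\mathcal{G}_{res}$ consists entirely of potential overlaps that are not real overlaps. As above, the real-overlap part carries the full volume growth $|\det Q|$, so $\rho(\mathcal{G}_{coin})\ge\rho(\text{real overlaps})=|\det Q|$ (indeed coincidences force $\rho(\mathcal{G}_{coin})=|\det Q|$ since each inflated coincidence again contains a coincidence and the coincidence tile has full volume). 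Meanwhile on $\mathcal{G}_{res}$ the supports of the corresponding intersections have empty interior; one shows $\rho(\mathcal{G}_{res})<|\det Q|$ by arguing that if it were $\ge|\det Q|$ then, by the Perron–Frobenius eigenvector of a strongly connected component of $\mathcal{G}_{res}$ and the self-similar structure of these boundary intersections, the union of the associated sets would have positive Lebesgue measure — contradicting that they are subsets of finite unions of tile boundaries, which have zero measure (this is exactly the content of the Hausdorff-dimension estimate for fractal tile boundaries). Combining, $\rho(\mathcal{G}_{coin})=|\det Q|>\rho(\mathcal{G}_{res})$.

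For (ii)$\Rightarrow$(i): suppose $\Tk$ does \emph{not} admit overlap coincidence. Then by Theorem\,\ref{coincidence-check-on-finite-basis} there is some $\alpha\in\mathcal{B}$ and an overlap $\mathcal{O}\in\mathcal{E}_\alpha$ none of whose iterates contains a coincidence; this $\mathcal{O}$ is a real overlap lying in $\mathcal{G}_{res}$. The inflation recursion restricted to $\mathcal{G}_{res}$ then still contains this real overlap and all its (real-overlap) descendants, which again carry positive volume multiplied by $|\det Q|$ at each step, so $\rho(\mathcal{G}_{res})\ge|\det Q|$. On the other hand $\rho(\mathcal{G}_{coin})\le\rho(\mathcal{G})=|\det Q|$ since the full graph tracks volumes of all supports, each bounded by the volume of a tile. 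Hence $\rho(\mathcal{G}_{coin})\le|\det Q|\le\rho(\mathcal{G}_{res})$, contradicting (ii). Therefore (ii) forces overlap coincidence.

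The main obstacle is the strict inequality $\rho(\mathcal{G}_{res})<|\det Q|$ in the direction (i)$\Rightarrow$(ii): one must rule out that the multiplicity-weighted recursion along boundary-only potential overlaps grows as fast as the volume. This is precisely where the boundary-Hausdorff-dimension estimate is needed — one has to show that a Perron eigenvalue equal to $|\det Q|$ on $\mathcal{G}_{res}$ would let one build, via the self-affine addresses coming from the MFS, a set of positive Lebesgue measure inside $\bigcup_i \partial A_i$, which is impossible. Making this rigorous requires care with how multiplicities on edges in $\mathcal{G}_{res}$ relate to the measure-zero sets $(A_k+d_{ki}-d_{\ell j}+\cdots)\cap A_\ell$, and with the fact that summing finitely many zero-measure sets cannot produce positive measure even under the inflation; I expect this to occupy the bulk of the technical work, with the remaining steps being essentially bookkeeping with the volume recursion and the Perron–Frobenius theorem applied to the strongly connected components of $\mathcal{G}_{coin}$, $\mathcal{G}_{res}$, and $\mathcal{G}$.
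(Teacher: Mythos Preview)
Your skeleton is exactly the paper's: both directions hinge on the fact that any real overlap generates $\asymp|\det Q|^n$ potential-overlap descendants, which forces $\rho(\mathcal{G}_{coin})=|\det Q|$ and shows that a real overlap in $\mathcal{G}_{res}$ would give $\rho(\mathcal{G}_{res})\ge|\det Q|$; so (ii)$\Rightarrow$(i) is just as you wrote, and the only substantive work is the strict inequality $\rho(\mathcal{G}_{res})<|\det Q|$ in (i)$\Rightarrow$(ii).

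For that step the paper does \emph{not} argue via a Perron--Frobenius eigenvector producing positive Lebesgue measure directly --- that heuristic breaks because knowing there are $\asymp\gamma^n$ GIFS pieces of volume $\asymp\beta^{-n}$ says nothing without control on how much they overlap. The paper instead treats the sets $E_k=(A_i+z)\cap A_j$, $(i,z,j)\in\mathcal{G}_{res}$, as attractors of a GIFS with maps $x\mapsto Q^{-1}(x+d)$, and then: (a) verifies the open set condition on each strongly connected component via the Luo--Yang criterion, which reduces to uniform discreteness of the iterated digit sets $(\mathcal{D}^n)_{\ell j}$; (b) replaces the Euclidean norm by the He--Lau pseudo-norm $w$ satisfying $w(Qx)=|\det Q|^{1/d}w(x)$, so that the affine GIFS behaves like a similitude system; (c) applies Mauldin--Williams to obtain $\dim_H^w(\bigcup_k E_k)=d\log\rho(\mathcal{G}_{res})/\log|\det Q|$ with positive, $\sigma$-finite $\mathcal{H}^s_w$-measure; and (d) excludes $s=d$ because $\mathcal{H}^d_w$ would then be translation-invariant and Borel regular, hence a multiple of Lebesgue measure, while the $E_k$ lie in $\bigcup_i\partial A_i$, which has Lebesgue measure zero. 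The missing ingredient in your sketch is precisely the separation step (a): it is what converts the combinatorial growth rate $\rho(\mathcal{G}_{res})$ into an honest dimension/measure statement, and it is where the uniform discreteness coming from the Meyer property is actually used.
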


\medskip

The potential overlaps $(u,y,v)$ can be divided into three cases.
\begin{itemize}
\item No intersection overlap: $(u + A_i - y) \cap (v+A_j) =
    \emptyset$,
\item Boundary touching overlap: $u+A_i-y$ and $v+A_j$ are
    just touching at their boundaries,
\item Real overlap: $(u+ A_i-y)^{\circ}\cap {(v+A_j)}^{\circ}
    \neq \emptyset$.
\end{itemize}

If $(u+A_i -y) \cap (v+A_j)$ is empty, then the distance between
two tiles $u+T_i-y$ and $v+T_j$ becomes larger by the iterations
of the tile-substitution $\Omega$. Therefore this potential
overlap does not produce an infinite walk on the graph of
potential overlaps by the iterations of $\Omega$. However, when
they are touching at their boundaries, this gives infinite walks
on the graph and it may contribute to the number of possible paths
and consequently to the spectral radius by repeated inflation. Our
task is to prove that this contribution is small so that we can
distinguish them from real overlaps.

\medskip

Let $(V, \Gamma)$ be a directed graph with a vertex set $V = \{1,
\dots, M \}$ and an edge set $\Gamma$. We call $\{f_e : e \in
\Gamma\}$, a collection of contractions $f_e: \R^d \to \R^d$, a
{\em graph-directed iterated function system (GIFS)}. Let
$\Gamma_{k\ell}$ be the set of edges from vertex $k$ to $\ell$,
then there are unique non-empty compact sets $\{E_k\}_{k=1}^N$
satisfying
\begin{equation}\label{GIFS}
E_k=\bigcup_{\ell=1}^M \bigcup_{e\in \Gamma_{k\ell}} f_e(E_{\ell}),  \ \ \ \mbox{for $k \le M$}
\end{equation}
(see \cite{Mauldin-Williams:88}). We say that (\ref{GIFS})
satisfies the {\em open set condition} (OSC) if there are open
sets $U_k$ so that
$$
\bigcup_{\ell=1}^M \bigcup_{e\in \Gamma_{k\ell}} f_e(U_{\ell})\subset U_k, \ \ \  \mbox{for $k \le M$}
$$
and the left side is a disjoint union. Further if $U_k\cap E_k\neq
\emptyset$ for all $k \le M$, then we say that the GIFS satisfies
the {\em strong open set condition} (SOSC).
\medskip

We observe from (\ref{intersection-boundaryTouchingTiles})
\begin{eqnarray} \label{equation-potentialOverlap-graph}
\lefteqn{(A_i + u -y - v) \cap A_j = } \nonumber \\
& & \bigcup_{k \le m} \bigcup_{\ell \le m}
\bigcup_{d_{ki} \in \Dk_{ki}} \bigcup_{d_{\ell j} \in \Dk_{\ell j}}
Q^{-1} \left( ((A_{k}+d_{ki}-d_{\ell j}+Q u -Q y - Q v) \cap A_{\ell}) +d_{\ell j} \right).
\end{eqnarray}
If $(k, z', \ell)$, where $z' = d_{ki}-d_{\ell j}+Q u-Q y - Q v$,
is not a potential overlap, we discard it from
(\ref{equation-potentialOverlap-graph}).

Let $M$ be the number of elements in $\mathcal{G}_{res}$. Now we
construct a graph for $\mathcal{G}_{res}$ identifying the
potential overlaps in $\mathcal{G}_{res}$ with the numbers in
$\{1, \cdots, M\}$. In the graph $\mathcal{G}_{res}$, if vertices
have no outgoing edges, we can remove them from
$\mathcal{G}_{res}$ successively. For each potential overlap
$(i,v,j)$ which corresponds to a vertex $k \le M$, let \be
\label{define-boundaryIntersection} E_k :=(A_i + z) \cap A_j, \ \
\ \mbox{where $z=u-y-v$}.\ee Let $\Gamma_{k \ell}$ be the set of
edges from $k$ to $\ell$ where $1 \le k, \ell \le M$. From
(\ref{equation-potentialOverlap-graph}), we notice that $E_k$'s
satisfy GIFS (\ref{GIFS}), where $f_e(x) = Q^{-1}(x + d_e)$ and
$d_e \in \mathcal{D}_{\ell j}$ for some $\ell \le m$. Denote by
$\Gamma_{k\ell}^n$ the set of paths of length $n$ from $k$ to
$\ell$ and for $I=e_1\dots e_n\in \Gamma_{k\ell}^n$ we put
$d_I=\sum_{i=1}^n Q^{n-i} d_{e_i}$.

\begin{rem} \label{subset-of-boundary}
If $\mathcal{G}_{res}$ does not contain the vertices of real
overlaps, each $E_k$ with $k \le M$ is a subset of $\partial
A_{j}$ for some tile $T_{j} = (A_{j}, j) \in \mathcal{A}$.
\end{rem}

\medskip

We use the recent development by He and Lau \cite{He-Lau:08} which
slightly modifies the Hausdorff measure. They introduced a new
type of gauge function, called {\em pseudo norm}
$w:\R^d\rightarrow \R_+$ corresponding to $Q$ having key
properties:
\begin{equation}
\label{contraction}
w(Qx)=|\det(Q)|^{1/d} w(x)
\end{equation}
and
\begin{equation}
\label{WeakNorm}
w(x+y)\le c(w) \max(w(x),w(y))
\end{equation}
for some positive constant $c(w)$. This $w$ induces the same
topology as Euclidean norm. By $w$, they modified the definition
of Hausdorff measure by: for an open set $U \subset \R^d$, a
subset $X \subset \R^d$ and $s, \delta \in \R_+$,
$$
\diam_w(U)=\sup_{x,y\in U} w(x-y),
$$
$$
\mathcal{H}^{s,\delta}_w (X)=\left. \inf_{X\subset \cup_i U_i}
\left\{\sum_i \diam_w(U_i)^s \ \right|\
\diam_w(U_i)<\delta \right\}
$$
and
$$
\mathcal{H}^{s}_w(X)=\lim_{\delta\downarrow 0} \mathcal{H}^{s,\delta}_w (X).
$$
Our new Hausdorff dimension is defined by
$$\dim_H^w(X)=\sup\{s |
\mathcal{H}_w^s(X)=\infty\} =\inf\{s | \mathcal{H}_w^s(X)=0\}.$$
Using the pseudo norm $w$, one can treat self-affine attractors
almost as easy as self-similar ones.

\medskip
To prove Theorem\, \ref{main},
we need the next lemma of Luo-Yang
\cite{Luo-Yang:09}.
This generalizes
a result in \cite{He-Lau:08} and its proof basically
follows from the idea of Schief \cite{Schief:94}, but
using pseudo norm instead of Euclidean norm.
Note that strong connectedness of GIFS is essential.

\begin{lem} \cite[Th.\,1.1]{Luo-Yang:09} \label{OSC}
Assume that the GIFS is strongly connected. Then following conditions are equivalent:
\begin{enumerate}
\item $\{d_I\ |\ I\in \Gamma_{k\ell}^n\}$ give distinct
    $\#(\Gamma_{k\ell}^n)$ points whose distance between two
    points has a uniform lower bound $r>0$ for all $k,\ell \le
    M$ and $n \ge 1$.
\item The GIFS satisfies strong open set condition (SOSC).
\end{enumerate}
\end{lem}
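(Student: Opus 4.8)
The plan is to transplant the argument of Schief \cite{Schief:94} to the graph-directed self-affine setting, replacing the Euclidean norm throughout by the pseudo norm $w$, which is built precisely so that self-affine cylinders scale like self-similar ones. Write $\lambda=|\det(Q)|^{-1/d}$. For a path $I=e_1\cdots e_n\in\Gamma_{k\ell}^n$ the composition $f_{e_1}\circ\cdots\circ f_{e_n}$ is the $w$-similarity $f_I(x)=Q^{-n}x+Q^{-n}d_I$, so by (\ref{contraction}) its image $f_I(E_\ell)$ has $w$-diameter $\lambda^n\diam_w(E_\ell)$, and for $I,I'\in\Gamma_{k\ell}^n$ the two images differ by the constant translation $Q^{-n}(d_I-d_{I'})$ of $w$-length $\lambda^n\,w(d_I-d_{I'})$. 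Hence condition (1) is equivalent to the geometric statement that, within each class $\Gamma_{k\ell}^n$, the cylinders $f_I(E_\ell)$ are pairwise $w$-separated in proportion to their own diameter, uniformly in $n$ (I work throughout with $w$-separation, which up to constants matches the Euclidean lower bound in (1)). Let $A$ be the adjacency matrix $A_{k\ell}=\#\Gamma_{k\ell}$, which is irreducible by strong connectedness, with Perron eigenvalue $\beta=\rho(A)$, and let $s$ be determined by $\beta\lambda^{s}=1$; all measure estimates below are taken in $\mathcal{H}^{s}_w$.

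For the direction (2)$\Rightarrow$(1) I would argue by disjointness of volumes, the easy half. Choose for each $\ell\le M$ a $w$-ball $B_w(a_\ell,\delta)\subset U_\ell$ with $\delta>0$ common to all $\ell$. The SOSC makes the images $\{f_e(U_\ell)\}$ disjoint inside $U_k$, and on iterating, the family $\{f_I(U_\ell):I\in\Gamma_{k\ell}^n\}$ is pairwise disjoint for every $n$. For $I,I'\in\Gamma_{k\ell}^n$ the disjoint sets $f_I(U_\ell)$ and $f_{I'}(U_\ell)$ contain the $w$-balls $B_w(f_I(a_\ell),\lambda^n\delta)$ and $B_w(f_{I'}(a_\ell),\lambda^n\delta)$, which are therefore disjoint, so $w(f_I(a_\ell)-f_{I'}(a_\ell))\ge 2\lambda^n\delta$; since this equals $\lambda^n\,w(d_I-d_{I'})$, we obtain $w(d_I-d_{I'})\ge 2\delta$ uniformly, which is (1), with distinctness immediate from $2\delta>0$.

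The substance is (1)$\Rightarrow$(2), which I would route through positive Hausdorff measure. The separation first yields a uniform multiplicity bound: fixing a source $k$, the level-$n$ cylinders of $E_k$ are the $f_I(E_\ell)$ with $\ell\le M$ and $I\in\Gamma_{k\ell}^n$, and within each fixed target $\ell$ the centres $Q^{-n}d_I$ are $w$-separated by $\ge r\lambda^n$ while lying in a common $w$-ball of radius $\asymp\lambda^n$; a packing estimate in the doubling quasi-metric $(\R^d,w)$ (cf. (\ref{WeakNorm})) caps their number by some $N_1=N_1(r,c(w),d)$, and summing over the $\le M$ targets shows that any $w$-ball of radius $\lambda^n$ meets at most $N_0=MN_1$ cylinders. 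Distributing on $E_k$ the self-affine measure $\mu$ obtained from the Perron eigenvector of $A$ — so that $f_I(E_\ell)$ carries mass $\asymp\beta^{-n}=\lambda^{ns}$ and $w$-diameter $\asymp\lambda^n$ — the multiplicity bound gives $\mu(B_w(x,t))\lesssim t^{s}$ for all $x$ and $t$, whence $\mathcal{H}^{s}_w(E_k)>0$ by the mass distribution principle. I would finish with Schief's density-point construction: at a $w$-density point of $\mu$ on $E_k$ take $U_k$ to be a small $w$-ball and verify, using the finite multiplicity and the exact scaling of $w$, that the contracted copies $f_e(U_\ell)$ land disjointly inside $U_k$ and that $U_k\cap E_k\neq\emptyset$. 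I expect this last passage — manufacturing from positive measure a single system $\{U_k\}$ that realizes both disjointness and $U_k\cap E_k\neq\emptyset$ simultaneously at every vertex — to be the main obstacle, and it is exactly where strong connectedness is indispensable: irreducibility of $A$ makes the Perron eigenvector strictly positive, giving every $E_k$ positive measure, and permits the density points at the various vertices to be chosen compatibly, so that one open-set system serves the entire graph.
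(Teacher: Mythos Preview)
Your direction (2)$\Rightarrow$(1) is fine and matches the paper's ``easy'' half.

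For (1)$\Rightarrow$(2), the paper (following Luo--Yang and Schief) takes a shorter route and never passes through positive Hausdorff measure. The uniform separation in (1) is used directly to bound the maximal overlap number
\[
\gamma=\max_{J}\#V(J),\qquad V(J)=\{I:\ b(I)=b(J),\ |I|=|J|,\ E_I\cap f_J([E_{t(J)}]^{\delta})\neq\emptyset\},
\]
via the same packing estimate you sketched. One then fixes a path $J$ attaining $\gamma$, observes $IV(J)=V(IJ)$ by maximality, and defines
\[
U_i=\bigcup_{n\ge 1}\bigcup_{I\in\Gamma_{ih}^{n}} f_{IJ}\bigl([E_{t(J)}]^{\delta/c(w)}\bigr),
\]
where $h=b(J)$. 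Strong connectedness ensures every $\Gamma_{ih}^{n}$ is eventually nonempty, so each $U_i$ is nonempty and meets $E_i$; disjointness of the pieces $f_e(U_\ell)$ inside $U_k$ follows from the invariance $IV(J)=V(IJ)$.

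Your proposal has a genuine gap at the final step. Taking $U_k$ to be ``a small $w$-ball at a density point'' cannot satisfy the open set condition: OSC requires $\bigcup_{\ell}\bigcup_{e\in\Gamma_{k\ell}}f_e(U_\ell)\subset U_k$, but the first-level images $f_e(U_\ell)$ are spread over all of $E_k$, not concentrated near any single point, so no small ball can contain them. Schief's argument is not a density-point construction; the open sets are infinite unions of iterated neighbourhoods of a specific maximal-overlap cylinder, and it is precisely the maximality of $V(J)$ that forces disjointness after one more application of the maps. Your bounded-multiplicity estimate $N_0$ is the right raw material --- it is essentially the finiteness of $\gamma$ --- but the combinatorial step that converts it into a feasible family $\{U_i\}$ is missing, and the ball-at-a-density-point surrogate you propose does not work.
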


\noindent{\it Proof of Theorem\,\ref{main}} (ii) $\Rightarrow$
(i). Consider an overlap $(u, y, v)$ where $u + T_i, v + T_j \in
\Tk$. Applying $Q^n$ to the overlapping part of the overlap, we
have
$$
\mu_d(Q^n((u + A_i - y)\cap (v +A_j)))=|\det(Q)|^n \mu_d((u +A_i - y)\cap (v +A_j))
$$
where $\mu_d$ is the $d$-dim Lebesgue measure. We know $|\det Q|=
\beta$ where $\beta$ is the Perron Frobenius root of substitution
matrix $(\#(\Dk_{ij}))$ (see \cite{lawa}). Since there are only
finitely many overlaps up to translations, there exist $r>0$ and
$R>0$ such that $(u + A_i- y)^{\circ} \cap (v + A_j)^{\circ}$
contains a ball of radius $r$ and is surrounded by a ball of
radius $R$. After $n$-iteration of inflation, the number of
potential overlaps $K_n$ generated from $(u, y, v)$ is estimated:
$$
c_1 \beta^n \le K_n \le c_2 \beta^n
$$
with some positive constants $c_1$ and $c_2$. Each real overlap
gives this growth of potential overlaps. It implies that
$\mathcal{G}_{res}$ cannot contain any real overlap (Recall that
we are taking into account the multiplicities of overlap growth).
This shows the claim.

(i) $\Rightarrow$ (ii). We show that if all overlaps lead to a
coincidence then $\mathcal{G}_{res}$ cannot have a spectral radius
$\beta = |\det Q|(= \rho(\mathcal{G}_{coin}))$. By the assumption,
$\mathcal{G}_{res}$ does not contain overlaps. So from
Remark\,\ref{subset-of-boundary}, $Y=\bigcup_{k=1}^M E_k$, where
$E_k$'s are defined as in (\ref{define-boundaryIntersection}), is
the subset of the union of boundaries of tiles. By Lemma \ref{OSC}
the GIFS satisfies OSC because the uniform discreteness condition
(1) of the Lemma\,\ref{OSC} automatically follows from the fact
that $(\Dk^n)_{ij}$'s are uniformly discrete for any $i,j \in m$
and $n \ge 1$(see \cite{lawa}).

We follow Mauldin-Williams \cite{Mauldin-Williams:88} to compute a
new Hausdorff dimension using pseudo norm $w$ instead of Euclidean
norm.
Let
\[ s = d \log \gamma/\log \beta, \]
where $\gamma=\rho(\mathcal{G}_{res})$ and $\beta = |\mbox{det}
Q|$. We study the value $\mathcal{H}_{w}^s(Y)$. First, assuming
strong connectedness of GIFS and OSC, we show
$0<\mathcal{H}_{w}^{s}(Y)<\infty$ by using standard mass
distribution principle (c.f. Theorem 1.2 in \cite{Luo-Yang:09}).
Second we use a simple fact: an infinite path on GIFS must
eventually fall into a single strongly connected component. Thus
for GIFS without strong connectedness, we classify infinite walks
on GIFS by the prefixes before they fall into the last strongly
connected components. This gives an expression of an attractor of
general GIFS as a countable union of contracted images of
attractors which belong to strongly connected components. In this
way we can show the Hausdorff measure $\mathcal{H}_{w}^{s}$ is
positive and $\sigma$-finite, by applying Lemma \ref{OSC} to each
strongly connected component. This shows the new Hausdorff
dimension of $Y$ with respect to the pseudo norm $w$
$$
\dim^w_H(Y)= s.
$$
Notice that the value $\mathcal{H}_{w}^{s}(Y)$ can be infinite
since we do not know that our GIFS is strongly connected (c.f.
\cite[Th.\,4]{Mauldin-Williams:88}).

Now if $s=d$, then $\mathcal{H}_{w}^{s}$ is a translationally
invariant Borel regular measure having positive value for any open
sets because the pseudo norm is comparable with Euclidean norm
(\cite[Prop.\,2.4]{He-Lau:08}). Therefore $\mathcal{H}_{w}^s$ must
be a constant multiple of the $d$-dimensional Lebesgue measure, by
the uniqueness of Haar measure. But this is impossible because the
$d$-dimensional Lebesgue measure of the boundary of self-affine
tiles must be $0$(see \cite{Praggastis:99}). This shows $s=d\log
\gamma/\log \beta<d$ which completes the proof. \qed

\medskip

The following conjecture is a folklore. It is mentioned as an open
problem in \cite{Solomyak:08} from personal communication with M.
Urba\'nski.
\begin{conj} \label{folk}
For $d$-dimensional non-polygonal self-affine tiling $\Tk$, each
tile $T=(A,i)$ satisfies \[ d-1<\dim_H( \partial A)<d . \]
\end{conj}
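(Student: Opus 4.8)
\emph{Proof proposal for Conjecture~\ref{folk}.} I would prove the two inequalities by entirely different means, and I expect only the upper one to be within reach of the present techniques. For the upper bound $\dim_H(\partial A)<d$, the plan is to apply exactly the graph-directed machinery used for Theorem~\ref{main}, but to the graph of \emph{all} boundary-touching potential overlaps rather than only to $\mathcal{G}_{res}$. After the normalization of Section~\ref{Computing coincidence} fix a tile $T=(A,j)\in\Tk$. Every point of $\partial A$ lies in an adjacent tile, and by FLC there are only finitely many equivalence classes $(i,z,j)$ of such adjacencies; the corresponding compact sets $E_k=(A_i+z)\cap A_j\subset\partial A_j$ satisfy the GIFS~(\ref{equation-potentialOverlap-graph}) \emph{exactly}, since the only sub-pieces discarded there are empty (this is the purpose of the choice of $R$ in~(\ref{formula-diam})) and since $\Omega(\Tk)=\Tk$ sends a pair of tiles with disjoint interiors to pairs of sub-tiles with disjoint interiors, so the subfamily of boundary-touching overlaps is closed under inflation. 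Hence $Y:=\bigcup_k E_k=\bigcup_{j\le m}\partial A_j$ is precisely the attractor of a GIFS with contractions $f_e(x)=Q^{-1}(x+d_e)$. Write $\beta=|\det Q|$ and let $\gamma$ be the spectral radius of the multiplicity-weighted edge matrix of this graph. Uniform discreteness of the sets $(\mathcal{D}^n)_{ij}$ (\cite{lawa}) verifies condition~(1) of Lemma~\ref{OSC}, so the GIFS has the strong open set condition, and the strongly-connected-component argument in the proof of Theorem~\ref{main}, together with~(\ref{contraction}), yields $\dim_H^w(Y)=d\log\gamma/\log\beta$. If this equalled $d$, then $\mathcal H_w^d$ would be a translation-invariant Borel measure, finite and positive on bounded open sets (pseudo norm comparable with Euclidean, \cite[Prop.\,2.4]{He-Lau:08}), hence a constant multiple of Lebesgue measure by uniqueness of Haar measure; but then $\mathcal H_w^d(Y)=0$ since tile boundaries are Lebesgue-null (\cite{Praggastis:99}), contradicting the mass-distribution estimate $\mathcal H_w^d(Y)>0$. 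Thus $\dim_H(\partial A)=\dim_H^w(\partial A)<d$, with no hypothesis of non-polygonality needed.

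For the lower bound $d-1<\dim_H(\partial A)$ the inequality $\dim_H(\partial A)\ge d-1$ is automatic, since $\partial A$ separates $\R^d$ and so has topological (hence Hausdorff) dimension at least $d-1$; the whole content is the \emph{strict} inequality for a non-polygonal tiling. The plan is to prove the contrapositive: if $\dim_H(\partial A)=d-1$ then $\Tk$ is polygonal. Using the same exact GIFS for $\partial A_j$, the number of level-$n$ pieces is $\asymp\gamma^n$ and each is a $Q^{-n}$-image of some $E_k$, so has $\mathcal H_w^{d-1}$-measure $\asymp\beta^{-(d-1)n/d}$; hence $\dim_H(\partial A)=d-1$ forces $\gamma=\beta^{(d-1)/d}$ and $0<\mathcal H_w^{d-1}(\partial A)<\infty$. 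Finite $(d-1)$-dimensional Hausdorff measure should imply that $\partial A$ is $(d-1)$-rectifiable up to a null set, and I would then feed this back into the GIFS recursion $\partial A_j=\bigcup_{i}\bigcup_{e}Q^{-1}(E_{\cdot}+d_e)$ to argue that $Q$ must act as a similarity on every approximate tangent plane that occurs and, by iterating, that $\partial A$ is a finite union of pieces of affine hyperplanes, i.e.\ that $A$ is a polytope.

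The hard step is the last one: ruling out that the ``surface-area'' growth rate $\gamma=\beta^{(d-1)/d}$ is realized by a genuinely fractal, oscillating boundary rather than by honest flat faces. This rectifiability-versus-fractality rigidity for self-affine GIFS attractors, with a non-similarity $Q$ and no lattice structure, is exactly what keeps Conjecture~\ref{folk} open; I would expect it to require a thermodynamic-formalism computation of $\gamma$ in terms of the singular values of $Q$ (so that $\gamma=\beta^{(d-1)/d}$ becomes a sharp pressure identity forcing conformality of $Q$ on the relevant subspaces), or an altogether new rigidity argument, and I would not claim that the methods of this paper suffice for it.
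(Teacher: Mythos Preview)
This statement is labeled a \emph{Conjecture} in the paper, and the paper does not prove it. What the paper does prove is Theorem~\ref{boundary}, namely $\dim_H^w(\partial A)<d$ for the pseudo-norm Hausdorff dimension, and it remarks explicitly that this settles the right inequality of Conjecture~\ref{folk} \emph{only} when $Q$ is a similitude.

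Your argument for the upper bound is, step for step, the paper's proof of Theorem~\ref{boundary}: form the GIFS on boundary-touching pairs, invoke Lemma~\ref{OSC} via uniform discreteness of $(\Dk^n)_{ij}$, compute $\dim_H^w(Y)=d\log\gamma/\log\beta$ by Mauldin--Williams with the pseudo norm, and rule out $s=d$ through the Haar-measure argument and the vanishing Lebesgue measure of tile boundaries. The gap is the sentence ``Thus $\dim_H(\partial A)=\dim_H^w(\partial A)<d$''. The pseudo norm $w$ of He--Lau is chosen so that $w(Qx)=|\det Q|^{1/d}w(x)$; for a non-similarity $Q$ this forces $w$ to be genuinely anisotropic and \emph{not} bilipschitz to the Euclidean norm, so the two Hausdorff dimensions need not agree. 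The ``comparability'' cited from \cite[Prop.~2.4]{He-Lau:08} gives only topological equivalence (enough for the Haar-measure step), not a bilipschitz bound. This is precisely why the paper states the result for $\dim_H^w$ and singles out the self-similar case as the one where the classical conjecture follows. So your upper-bound proof establishes Theorem~\ref{boundary} but not the right inequality of Conjecture~\ref{folk} for general self-affine $Q$.

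On the lower bound you are right that $\dim_H(\partial A)\ge d-1$ is automatic and that the strict inequality is the content. Your proposed route through $\gamma=\beta^{(d-1)/d}$, rectifiability, and a rigidity argument is a reasonable outline, and your final paragraph is an honest assessment: the paper offers nothing toward this direction, and neither does your proposal beyond identifying where the difficulty lies.
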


We partially solve a version of this
conjecture in the following Theorem\,\ref{boundary}.
Indeed if the matrix $Q$ gives similitudes, this settles the right
inequality of Conjecture \ref{folk}.

\begin{thm}\label{boundary}
For $d$-dimensional self-affine tiling $\Tk$, each tile $T=(A,i)$
satisfies \[ \dim_H^w (\partial A)<d. \]
\end{thm}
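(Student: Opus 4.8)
The plan is to reduce the boundary $\partial A$ of a single prototile $T=(A,i)$ to the graph-directed self-affine structure carried by the potential overlap graph, and then apply the dimension computation already developed in the proof of Theorem \ref{main}. First I would observe that $\partial A_j = \bigcup \{ (A_i + z) \cap A_j : z \in \Xi(\Tk) \cap B_R(0),\ (A_i+z)^\circ \cap A_j^\circ = \emptyset,\ (A_i+z)\cap A_j \neq \emptyset \}$ up to a finite union; more precisely, every boundary point of $A_j$ lies in some tile $A_i + z$ with $z$ a legal translation vector, and by the Meyer property and the potential overlap bound (\ref{potential}) only finitely many such $z$ (indeed only the vertices recorded in $\mathcal{G}$) are relevant. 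Thus $\partial A_j$ is covered by finitely many sets $E_k = (A_i+z)\cap A_j$ of the form (\ref{define-boundaryIntersection}), each associated to a \emph{non-real} potential overlap — i.e. to a vertex of $\mathcal{G}_{res}$ together with, possibly, the boundary-touching vertices that never become real overlaps. Setting $Y = \bigcup_{k \le M} E_k$ exactly as in the proof of Theorem \ref{main}, we have $\partial A_j \subset Y$ for every $j \le m$.

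Next I would invoke the GIFS (\ref{GIFS}) satisfied by the $E_k$'s, with maps $f_e(x) = Q^{-1}(x+d_e)$, which lies at the heart of the proof of Theorem \ref{main}. Since $(\Dk^n)_{ij}$ is uniformly discrete for all $i,j,n$ (by \cite{lawa}), condition (1) of Lemma \ref{OSC} holds on each strongly connected component, so the GIFS satisfies OSC there; decomposing any infinite walk by the prefix it traverses before entering its terminal strongly connected component (as in the second half of the proof of Theorem \ref{main}) expresses $Y$ as a countable union of contracted images of attractors of strongly connected sub-GIFS. Applying the He–Lau / Luo–Yang pseudo-norm machinery — the pseudo norm $w$ with $w(Qx) = |\det Q|^{1/d} w(x)$ from (\ref{contraction}) — to each component and using the mass distribution principle exactly as before, one gets $\dim_H^w(Y) = s$ with $s = d\log\gamma/\log\beta$, where $\gamma = \rho(\mathcal{G}_{res})$ and $\beta = |\det Q|$ is the Perron–Frobenius eigenvalue of the substitution matrix. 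Monotonicity of Hausdorff dimension then gives $\dim_H^w(\partial A) \le \dim_H^w(Y) = s$.

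Finally I would argue $s < d$. If $s = d$ then $\mathcal{H}^d_w$ restricted to $Y$ is a nonzero, translation-invariant, Borel-regular measure giving positive mass to every open set (because $w$ is comparable to the Euclidean norm by \cite[Prop.\,2.4]{He-Lau:08}), hence a constant multiple of Lebesgue measure by uniqueness of Haar measure; but $\partial A$ — and more generally the union of all tile boundaries — has $d$-dimensional Lebesgue measure zero (see \cite{Praggastis:99}), a contradiction. Therefore $s < d$, and $\dim_H^w(\partial A) < d$. The main obstacle I anticipate is the first step: verifying carefully that $\partial A_j$ is genuinely contained in the finite union $Y$ built from $\mathcal{G}_{res}$ — one must be sure that a boundary point of $A_j$ is always "witnessed" by an adjacent tile whose translation vector is a potential-overlap vertex, and that the real-overlap vertices in $\mathcal{G}_{coin}$, which \emph{do} have interior intersection and \emph{do} carry spectral radius $\beta$, are legitimately excluded because they contribute only interior points, not boundary points. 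Once the containment $\partial A \subset Y$ is secured, the rest is a direct reuse of the dimension argument already proved for Theorem \ref{main}.
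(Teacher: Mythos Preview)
Your argument has a genuine gap: you build the covering set $Y$ from the graph $\mathcal{G}_{res}$, but Theorem \ref{boundary} makes no assumption that $\Tk$ admits overlap coincidence. When overlap coincidence fails, $\mathcal{G}_{res} = \mathcal{G}\setminus\mathcal{G}_{coin}$ contains \emph{real} overlaps, and the growth estimate in the proof of Theorem \ref{main} then gives $\rho(\mathcal{G}_{res}) = \beta$; your computation yields $s = d\log\gamma/\log\beta = d$, and the Haar-measure contradiction never fires. In effect you have re-derived the implication (i)$\Rightarrow$(ii) of Theorem \ref{main} rather than proved Theorem \ref{boundary}. (There is also the secondary issue you yourself flag: the vertices of $\mathcal{G}$ are generated from the specific translations $Q^n\alpha$, $\alpha\in\mathcal{B}$, and it is not clear that every adjacency in $\Tk$ produces an equivalence class that actually occurs in $\mathcal{G}$.)

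The paper sidesteps both problems by building a \emph{different} GIFS, not $\mathcal{G}_{res}$: its vertex set is the collection of all pairs of tiles $u+T_i,\ v+T_j \in \Tk$ whose boundaries touch in the tiling itself (in the overlap language, translation $y=0$). Because distinct tiles of a tiling have disjoint interiors, every vertex of this new graph is automatically a boundary-touching overlap and never a real overlap, \emph{regardless} of whether $\Tk$ has overlap coincidence. The edges are defined by the same iteration (\ref{transition}), (\ref{equation-potentialOverlap-graph}), and now the argument of your second and third paragraphs goes through unconditionally: the spectral radius $\gamma$ of this graph satisfies $d\log\gamma/\log\beta < d$ by the Haar-measure contradiction, and since every boundary point of a tile in $\Tk$ lies in an adjacent tile of $\Tk$, the attractors $E_k$ of this GIFS cover $\partial A$. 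The analytic machinery you invoke (pseudo norm, Lemma \ref{OSC}, decomposition into strongly connected components, Praggastis) is exactly what the paper uses; only the choice of underlying graph needs to change.
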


\begin{proof}
We consider a collection of all pairs of tiles in $\Tk$ whose
boundaries are touching. As in (\ref{transition}) and
(\ref{equation-potentialOverlap-graph}), we get a new GIFS which
is defined on this collection. Applying the same argument as in
Theorem\,\ref{main}, we get $s=d\log \gamma/\log \beta<d$ which
shows the claim.
\end{proof}

\section{Examples} \label{Examples}

We implemented Mathematica programs which perform our algorithm to
check the overlap coincidence for self-affine tilings. Readers can
get the Mathematica programs in the following website.
\begin{verbatim}
http://mathweb.sc.niigata-u.ac.jp/~akiyama/Research1.html
\end{verbatim}
For a given expanding matrix $Q$ and digit sets $\Dk_{ij}$ of a
self-affine tiling which has the Meyer property, the program gives
outputs $\rho(G_{coin})$ and $\rho(G_{res})$. By Theorem
\ref{main}, we can determine whether it satisfies overlap
coincidence or not. If the tiling does not satisfy the Meyer
property, it may not stop, or stop but produce incorrect outputs.

In actual computation, it is the bottleneck of the program to find
all initial potential overlaps for Lemma\, \ref{init0} and
\ref{initial-potential-overlap}. So we use two major tricks in the
program to make the computation fast. First, we translate the
digits $\Dk_{ij}$ to $\Dk'_{ij}$ as shown in
(\ref{new-digit-sets}) such that the number of potential overlaps
and $e^{(k)}$ are small. The size of $e^{(k)}$ is significant in
the computation of collecting all the initial potential overlaps
in Lemma\,\ref{init0}. To make $e^{(k)}$ small, we obtain some
number of points in $A_i$ using the tile equation
(\ref{tile-subdiv}) and choose $a_i$ among them which is located
closest to their centroid. Then we shift tiles $A_i$ to $A_i-a_i$.
Second, in order to get all the initial potential overlaps
$\mathcal{G}_{\alpha,0}$ in
Lemma\,\ref{initial-potential-overlap}, we try to find a fine
lattice in $\R^d$ such that we make an embedding of an iterated
point set into the lattice taking the closest lattice point for
each point of the set. Using the lattice, we can easily compute
the candidates of initial potential overlaps, which is much faster
than dealing with the original point set. We list selected
examples of our computation below.

\medskip

For $1$-dimension substitution sequences, we can obtain
self-similar tilings by suspension which associates to each letter
the interval whose length is each entry of a left eigenvector of
the incidence matrix. Pure point spectrum for the $\Z$-action on a
substitution sequence dynamical system is equivalent to pure point
spectrum for the $\R$-action on its suspension tiling dynamical
system \cite{Clark-Sadun}. The following example shows how to
obtain a tile substitution when a symbolic letter substitution is
given. In this case, we give a separate computational program in
the above website to check overlap coincidence directly.

\begin{ex} \label{Period-doubling}
{\em We consider a period-doubling substitution $0 \to 01$ and $1
\to 00$. Then the substitution matrix is $\begin{pmatrix} 1 & 2 \\
1 & 0
\end{pmatrix}$ and $(1,1)$ is a left eigenvector of PF-eigenvalue $2$.
Giving length $1$ to each letter, we can consider the following
tile equation
\begin{eqnarray*}
2 A_1&=&A_1 \cup (A_2 + 1)\\
2 A_2&=&A_1 \cup (A_1 + 1)
\end{eqnarray*}
and get that its suspension tiling has overlap coincidence.

We have computed a substitution $ 0 \to 051000$, $1 \to 324100$,
$2 \to 24100$, $3 \to 324333$, $4 \to 051433$, $5 \to 51433$ given
in \cite[Ex.\,5.3]{BBK} and a substitution $0 \to 03$, $1 \to 0$,
$2 \to 21$, $3 \to 2$ [personal communication from B. Sing]. They
both do not give overlap coincidence (see \cite[Sec.\,6c.3]{Sing}
for alternative constructions of no pure point reducible Pisot
substitution). }
\end{ex}

\medskip

We show the results of the following examples in
Table\,\ref{Results}.

\begin{ex} \label{Fibonnaci}
{\em Fibonacci substitution tiling is a well-known $1$-dimension
tiling. The tile equation can be given by
\begin{eqnarray*}
\alpha A_1&=&A_1 \cup (A_2+1)\\
\alpha A_2&=&A_1
\end{eqnarray*}
where $\alpha^2-\alpha-1=0$. The corresponding MFS is $\Phi =
\begin{pmatrix} \{f_1\} & \{f_1\} \\ \{f_2\} & \emptyset
\end{pmatrix}$ where $f_1(x)=\alpha x$ and $f_2(x)= \alpha x +
1$.}
\end{ex}

\begin{ex}
\label{Dekking} {\em Dekking in \cite{Deka, Dekb} constructed
self-similar tilings from endomorphisms of a free group. Kenyon
extended this idea in \cite[\S 6]{Ken}. We examined Example 7.5 in
\cite{soltil} derived by this method (see Fig \ref{Ex7_5}). It is
known that the corresponding tiling dynamical system is not weakly
mixing and has a large discrete part in the spectrum. We check
that the dynamical system has pure point spectrum. The tile
equation is
\begin{eqnarray*}
\alpha A_1&=&A_2\\
\alpha A_2&=&(A_2-1-\alpha) \cup (A_3-1)\\
\alpha A_3&=&A_1-1
\end{eqnarray*}
with $\alpha \approx 0.341164 + i 1.16154 $ which is a root of the
polynomial $x^3+x+1$.

We identify  $\C$ with $\R^2$ to simplify the notation; the
multiplication of $\alpha$ in $\C$ is expressed by the
multiplication of a matrix $Q=\begin{pmatrix} a & - b
\\ b & a \end{pmatrix}$ in $\R^2$, where $\alpha = a+ bi$. The Delone multi-colour set is
given by:
\begin{eqnarray*}
\Lambda_1&=&\alpha \Lambda_3 -1 \\
\Lambda_2&=&\alpha \Lambda_1 \cup (\alpha \Lambda_2 -1-\alpha) \\
\Lambda_3&=&\alpha \Lambda_2 -1 \,.
\end{eqnarray*}
We take a basis $B$ of translation vectors $\{1+\alpha,
\alpha+\alpha^2\} \subset \Lambda_2-\Lambda_2 \subset \Xi(\Tk)$.
In Table\,\ref{Results}, we write this choice $\Lambda_2$ as
Colour 2. The MFS is
\[\Phi =\begin{pmatrix} \emptyset &
\emptyset& \{f_3\}
\\ \{f_1\} & \{f_2\} & \emptyset \\ \emptyset& \{f_3\} &
\emptyset
\end{pmatrix}\]
where $f_1 = \alpha x$, $f_2 = \alpha x -1 - \alpha$ and $f_3 =
\alpha x -1$. We obtain $\# \mathcal{G}_{coin}=15$, $\#
\mathcal{G}_{res}=24$\footnote{This number depends on other
parameters we choose for computation.}, $\rho(\mathcal{G}_{coin})
\approx 1.46557$ and $\rho(\mathcal{G}_{res}) \approx 1.32472$.
This shows overlap coincidence and therefore the tiling dynamical
system associated with this tiling has pure point spectrum. Since
this case is self-similar, the Hausdorff dimension w.r.t. the
pseudo norm coincides with the usual Hausdorff dimension. So the
Hausdorff dimension of the boundary of each tile\footnote{In this
example, the graph $\mathcal{G}_{res}$ is weakly connected and has
only one strongly connected component of spectral radius greater
than one. Therefore the boundary of each tile has the same
dimension. The same holds for all examples in this paper.} is
$2\log(\rho(\mathcal{G}_{coin}))/\log(\rho(\mathcal{G}_{res})) =
1.47131$.}
\end{ex}

\begin{ex}
\label{Kenyon-Solomyak} {\em Continuing Ex. \ref{Dekking}, we also
looked at the self-affine tiling example in \cite[Fig.\,2 and
3]{KS} for which the eigenvalues of the expansion map satisfy
$x^3-x^2-4 x +3 = 0$ and the dominant eigenvalue is not a unit.
Let $\alpha\approx 2.19869$, $\beta\approx-1.91223$ be two real
roots of it. Set $Q=\begin{pmatrix}
\alpha&0\\0&\beta\end{pmatrix}$, ${\bf 1}=(1,1)$ and ${\bf
v}=(\alpha,\beta)$. Then the tile equation is
\begin{eqnarray*}
Q A_1&=& (A_3+{\bf v}) \cup (A_3+2{\bf v}-{\bf 1}) \cup (A_3+3{\bf v}-2\cdot{\bf 1})\\
Q A_2&=& (A_1+{\bf 1}) \cup (A_2+{\bf 1}) \cup
(A_2+{\bf v}) \cup (A_2+2{\bf v}-{\bf 1}) \cup (A_2+3{\bf v}-2\cdot{\bf 1})\\
Q A_3&=& A_1 \cup A_2
\end{eqnarray*}
It gives overlap coincidence as well. See Table\,\ref{Results}
with the notation ${\bf v}^2=(\alpha^2,\beta^2)$. }
\end{ex}

\begin{figure}
\begin{center}
\subfigure[Example \ref{Dekking}]
{
\psfig{figure=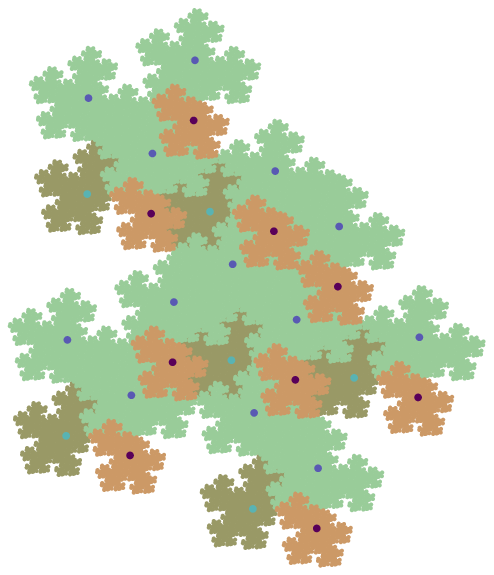}
\label{Ex7_5}
}
\hspace{2cm}
\subfigure[Example \ref{AFHI}]
{
\psfig{figure=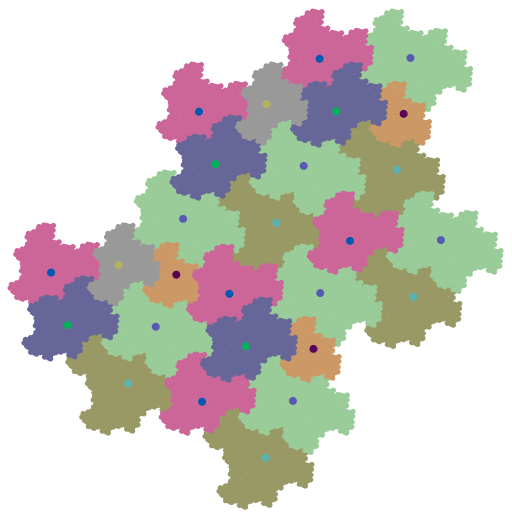}
\label{AFHI}
}
\end{center}
\end{figure}

\begin{ex} \label{Domino}
{\em It is known that domino tiling does not have pure point
spectrum \cite[Ex.\,7.3]{soltil}. The expansion map and MFS are
 \[Q = \begin{pmatrix} 2&0\\0&2\end{pmatrix} \ \ \ \mbox{and} \ \
 \Phi = \begin{pmatrix} \{f_3, f_4\} & \{f_1, f_6\} \\
\{f_1, f_5\} & \{f_2, f_4\} \end{pmatrix}\]
where $f_1=Qx$,
$f_2=Qx+(1,0)$, $f_3=Qx+(0,1)$, $f_4=Qx+(1,1)$, $f_5=Qx+(0,3)$ and
$f_6=Qx+(3,0)$. }
\end{ex}

\begin{ex} \label{minimal-Pisot}
{\em In the relation to the explicit construction of Markov
partition of toral automorphism, Thurston in \cite{Thu} introduced
$(d-1)$-dimensional self-similar tilings from numeration system
based on Pisot unit of degree $d$ which is called {\it Pisot dual
tilings}. Their basic properties are studied in Akiyama
\cite{Ak_Bd}. Such tiling dynamics are expected to be pure point.
We confirm that Pisot dual tilings associated to $x^3-x^2-x-1$,
$x^3-x-1$, $x^3-2x^2-x-1$, $x^3-3x^2-1$ and $x^4-x^3-x^2-x-1$
admit overlap coincidences from our algorithmic computation.

\noindent (1) The Pisot dual tiling associated to $x^3-x-1$ has an
expansive factor $\alpha \approx -0.877439 + i 0.744862 $ for
which $\alpha^3 + \alpha -1 =0$. The MFS $\Phi$ is {\tiny
\[
\begin{pmatrix}
\{f_1\} & \{f_1\}& \emptyset& \emptyset& \emptyset\\
\emptyset& \emptyset& \{f_1\}& \emptyset& \emptyset\\
\emptyset& \emptyset& \emptyset& \{f_1\}& \emptyset\\
\emptyset& \emptyset& \emptyset& \emptyset& \{f_1\}\\
\{f_2\}& \emptyset& \emptyset& \emptyset& \emptyset
\end{pmatrix} \]}
where $f_1 = \alpha x$ and $f_2 = \alpha x + 1$.

\noindent (2) The Pisot dual tiling associated to
$x^4-x^3-x^2-x-1$ is $3$-dimensional. The expansion map $Q$ and
MFS $\Phi$ are \[
\begin{pmatrix}
\frac{1}{\alpha_1} & 0 & 0 \\
0 & \frac{1}{\alpha_2} & 0 \\
0 & 0 & \frac{1}{\alpha_3} \end{pmatrix}
 \ \ \ \mbox{and} \ \
\begin{pmatrix} \{f_1\} & \{f_1\} &
\{f_1\} & \{f_1\} \\ \{f_2\} & \emptyset & \emptyset & \emptyset \\
\emptyset & \{f_2\} & \emptyset & \emptyset \\ \emptyset &
\emptyset & \{f_2\} & \emptyset \end{pmatrix},
\]
where $\alpha_1, \alpha_2, \alpha_3$ are roots of the polynomial
$x^4 - x^3 - x^2 - x -1$, $f_1 = Qx$ and $f_2 = Q(x + (1, 1, 1))$.
The translation vectors are $ (2 {\alpha_1}^2 - {\alpha_1}^3, 2
{\alpha_2}^2 - {\alpha_2}^3, 2 {\alpha_3}^2 - {\alpha_3}^3)$, $
(2{\alpha_1}+{\alpha_1}^2-{\alpha_1}^3,
2{\alpha_2}+{\alpha_2}^2-{\alpha_2}^3,
 2{\alpha_3}+{\alpha_3}^2-{\alpha_3}^3)$,
$(2+{\alpha_1}+{\alpha_1}^2-{\alpha_1}^3,
2+{\alpha_2}+{\alpha_2}^2-{\alpha_2}^3,
2+{\alpha_3}+{\alpha_3}^2-{\alpha_3}^3)$. }
\end{ex}

\begin{ex} \label{AFHI}
{\em Geometric realization of 1-dimensional substitutions has been
studied for a long time, which is motivated by Markov partition of
toral automorphism. The original idea came from Rauzy \cite{Rau}
and got extended in a great deal to Pisot substitutions in
\cite{AI} by Arnoux-Ito. They have a domain exchange structure
coming from substitutions and inherit their spectral properties
(see also \cite{FIR}). Recently Arnoux-Furukado-Harriss-Ito in
\cite{AFHI} generalized the idea to a special class of complex
Pisot substitution which no longer has direct domain exchange
structure but gives an explicit Markov partition of 4-toral
automorphism. We examined the example in \cite[Prop.\,6.8]{AFHI}
whose tile equation is
\begin{eqnarray*}
\alpha A_1&=&A_2\cup A_3\\
\alpha A_2&=&A_4 \cup A_5\\
\alpha A_3&=&A_6\\
\alpha A_4&=&A_1\\
\alpha A_5&=&A_2+\alpha-\alpha^2\\
\alpha A_6&=&A_4+1-\alpha^2
\end{eqnarray*}
with $\alpha \approx -0.727136 + i 0.934099 $ which is a root of
the polynomial $x^4-x^3+1$. The MFS $\Phi$ is {\tiny
\[  \begin{pmatrix}
\emptyset& \emptyset& \emptyset& \{f_1\} & \emptyset& \emptyset\\
 \{f_1\} & \emptyset& \emptyset& \emptyset & \{f_3\}& \emptyset\\
 \{f_1\} & \emptyset& \emptyset& \emptyset & \emptyset & \emptyset\\
\emptyset & \{f_1\} & \emptyset& \emptyset& \emptyset & \{f_2\}\\
\emptyset & \{f_1\} & \emptyset& \emptyset& \emptyset & \emptyset\\
\emptyset & \emptyset & \{f_1\} & \emptyset& \emptyset& \emptyset
\end{pmatrix}\]}
where $f_1 = \alpha x $, $f_2 = \alpha x + 1 - \alpha^2$ and $f_3
= \alpha x + \alpha-\alpha^2$. The result shows that
$\rho(\mathcal{G}_{coin}) \approx 1.40127$ and
$\rho(\mathcal{G}_{res}) \approx 1.22074$ and it implies overlap
coincidence. We note that the Hausdorff dimension of the boundary
of each tile is $1.18242$. \label{NonPisot}
}
\end{ex}

\begin{ex}
\label{Penrose} {\em Bandt-Gummelt in \cite{BG} gave two fractal
Penrose tilings by fractal kites and darts having exact matching
condition. We confirm that these tilings admit overlap
coincidence. The expanding matrix $Q=\begin{pmatrix} t &0
\\ 0& t
\end{pmatrix}$ where $t^2 - t - 1 = 0$ and the MFS $\Phi$ is $20 \times 20$ matrix such that $\Phi = (F_{ij})$ and
$F_{ij} = \{f: f=Qx+d, d \in \Dk_{ij}\}$. We give the digit set
matrix $(\Dk_{ij})$ in Figure\,\ref{Fractal-Penrose-tiling}, where
$w = \cos(\pi/5) + i \sin(\pi/5)$, $t = \frac{(1+ \sqrt{5})}{2}$,
and $c \in \C$ satisfies $f(t^2 i) = i$ with $f(z) =
\frac{z}{t}w^4 + c$.
Note that the Hausdorff dimension of the boundary of each tile is
1.26634.
\begin{figure}
\begin{center}
\includegraphics[width=0.8\textwidth]{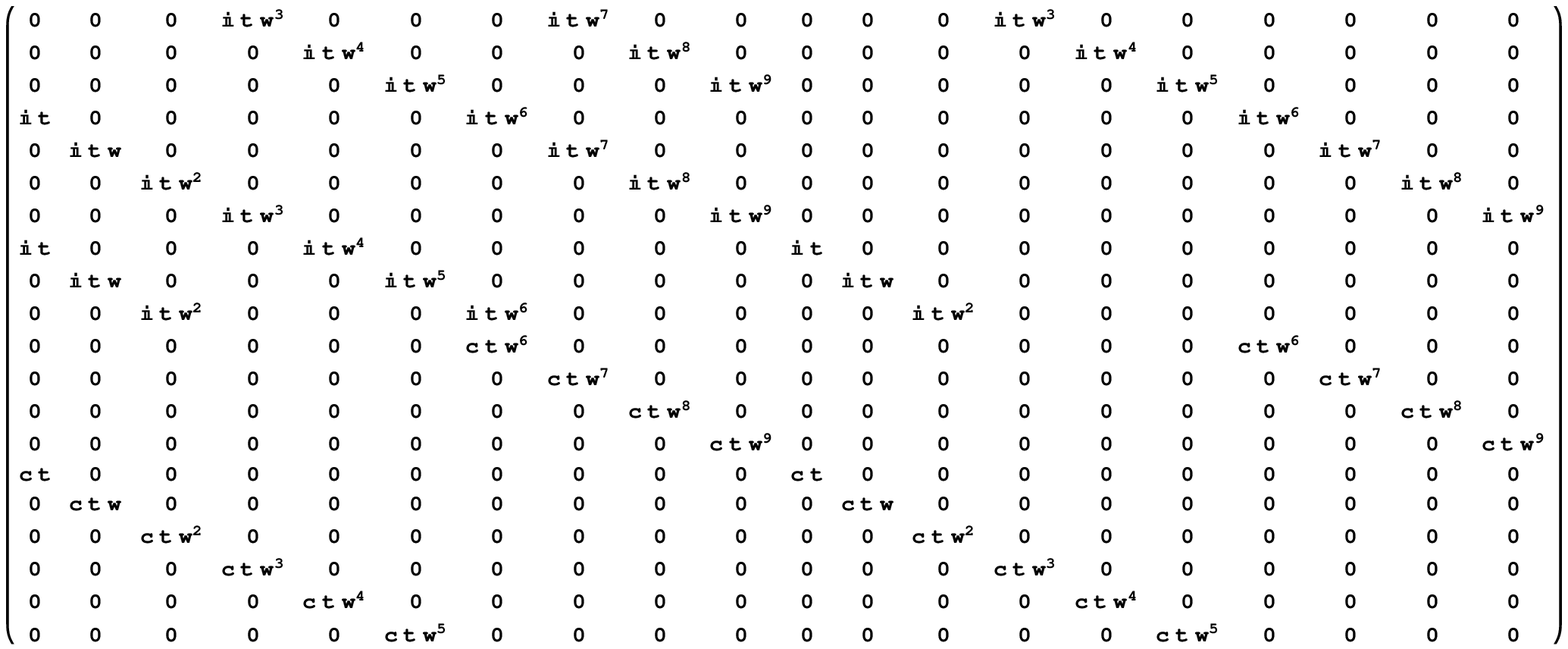}
\caption{Fractal Penrose tiling} \label{Fractal-Penrose-tiling}
\end{center}
\end{figure}
}
\end{ex}

\begin{ex} \label{3-dim-ChairTiling}
{\em Higher dimension chair tiling is discussed in \cite{LM1}. We
consider a 3-dim chair tiling which is defined by the expansion
matrix $\begin{pmatrix} 2&0&0 \\ 0&2&0 \\ 0&0&2 \end{pmatrix}$ and
the MFS $\Phi $ is {\tiny \[  \begin{pmatrix}
\{f_1, f_5\} & \{f_1\} & \{f_1\} & \{f_1\} & \emptyset & \{f_1\} & \{f_1\} & \{f_1\} \\
 \{f_2\} & \{f_2, f_6\} & \{f_2\} & \{f_2\} & \{f_2\} & \emptyset & \{f_2\} & \{f_2\} \\
 \{f_3\} & \{f_3\} & \{f_3, f_7\} & \{f_3\} & \{f_3\} & \{f_3\} & \emptyset & \{f_3\} \\
  \{f_4\} & \{f_4\} & \{f_4\} & \{f_4, f_8\} & \{f_4\} & \{f_4\} & \{f_4\} & \emptyset \\
 \emptyset & \{f_5\} & \{f_5\} & \{f_5\} & \{f_1, f_5\} & \{f_5\} & \{f_5\} & \{f_5\} \\
  \{f_6\} & \emptyset & \{f_6\} & \{f_6\} & \{f_6\} & \{f_2, f_6\} & \{f_6\} & \{f_6\} \\
   \{f_7\} & \{f_7\} & \emptyset & \{f_7\} & \{f_7\} & \{f_7\} & \{f_3, f_7\} & \{f_7\} \\
    \{f_8\} & \{f_8\} & \{f_8\} & \emptyset & \{f_8\} & \{f_8\} & \{f_8\} & \{f_4, f_8\}
 \end{pmatrix}\]}
where $f_1 = Qx + (0, 0, 0), f_2 = Qx + (1, 0, 0), f_3 = Qx + (0,
1, 0), f_4 = Qx + (1, 1, 0), f_5 = Qx + (1, 1, 1), f_6 = Qx + (0,
1, 1), f_7 = Qx + (1, 0, 1)$, and $f_8 = Qx + (0, 0, 1)$. }
\end{ex}

\begin{ex} \label{3-dim-ThueMorse}
{\em A $3$-dimension Thue-Morse tiling can be given by the
expanding matrix and the MFS \[ Q =
\begin{pmatrix} 2&0&0 \\ 0&2&0 \\ 0&0&2 \end{pmatrix} \ \ \ \mbox{and} \ \  \Phi = \begin{pmatrix}
\{f_1, f_4, f_6, f_7\} & \{f_2, f_3, f_8, f_5\} \\
  \{f_2, f_3, f_5, f_8\} & \{f_1, f_4, f_6, f_7\}
 \end{pmatrix} \]
where $f_i$, $1 \le i \le 8$, are given as above in
Ex.\,\ref{3-dim-ChairTiling}. }
\end{ex}

\begin{tiny}
\begin{table}
\begin{tabular}{|c|c|c|c|c|c|c|c|c|}
\hline
Tiling & Dim.  & Colour & \shortstack{\\ Translation \\ vectors} & $\# G_{coin}$ &
$\rho(G_{coin})$ & $\rho(G_{res})$ & \shortstack{\\ Pure \\ pointedness} \\
\hline
&&&&&&& \\
Ex.\,\ref{Fibonnaci} & 1
&  $1$ & $\alpha-1$ & 8 & 1.6180 & 1 & Yes \\
\hline
&&&&&&& \\
Ex.\,\ref{Dekking} & 2 &   2 & \shortstack{$-1-\alpha$\\$-\alpha-\alpha^2$}
& 15 & 1.4656 & 1.3247 & Yes \\
\hline
&&&&&&& \\
Ex.\,\ref{Kenyon-Solomyak} & 2 & 2& \shortstack{$-{\bf v}+{\bf v}^2$ \\ $-3-3{\bf v}+ {\bf v}^2$}
&10 & 4.2044 & 2.19869 & Yes \\
\hline
&&&&&&& \\
 Ex.\,\ref{Domino} & 2&  1 & \shortstack{$(1,0)$\\$(0,3)$} & 2 & 4 & 4 & No \\
\hline
&&&&&&& \\
 Ex.\,\ref{minimal-Pisot}(1) & 2 &  1&
 \shortstack{ $2-\alpha^2$ \\$-2-\alpha+2\alpha^2$ }
 & 20 & 1.3247 & 1.1673 & Yes \\
\hline
&&&&&&& \\
 Ex.\,\ref{AFHI} & 2 &  2 & \shortstack{$\alpha-\alpha^2$\\
$1-\alpha^2+\alpha^3$}
& 88 & 1.4013 & 1.2207 & Yes \\
\hline
&&&&&&& \\
 Ex.\,\ref{Penrose} & 2 & 1
&\shortstack{$(\sqrt{v},0)$\\$(\sqrt{v}/2,-v/2)$,\\$v^2-10v+5=0$}
&751 & 2.6180 & 1.8393 & Yes  \\
\hline
&&&&&&& \\
 Ex.\,\ref{3-dim-ChairTiling}& 3  &  1 &
 \shortstack{$(1,1,1)$ \\ $(0,0,2)$ \\ $(0,2,0)$}
& 16 & 8 & 4 & Yes   \\
\hline
&&&&&&& \\
Ex.\,\ref{3-dim-ThueMorse}  & 3 &   1 &
  \shortstack{ $(1,0,1)$\\ $(0,1,1)$\\ $(1,1,0)$} & 2 & 8 & 8 & No \\
\hline
&&&&&&& \\
 Ex.\,\ref{minimal-Pisot}(2) & 3 &  1 & See Ex.\,\ref{minimal-Pisot}(2) & 19 & 1.9276 & 1.6234 & Yes \\
\hline
\end{tabular}
\vspace{2mm} \caption{} \label{Results}
\end{table}
\end{tiny}

\medskip

\noindent {\bf Acknowledgments.}   We thank Boris Solomyak and
Beno\^it Loridant for helpful discussions.

\end{document}